\newtheorem{theorem}{Theorem}[section]
\newtheorem{corollary}[theorem]{Corollary}
\newtheorem{lemma}[theorem]{Lemma}
\theoremstyle{definition}
\newtheorem{example}[theorem]{Example}
\newtheorem{remark}[theorem]{Remark}
\begin{document}
\title{Boundary complexity and surface entropy of 2-multiplicative integer systems on $\mathbb{N}^d$}

\author[Jung-Chao Ban]{Jung-Chao Ban}
\address[Jung-Chao Ban]{Department of Mathematical Sciences, National Chengchi University, Taipei 11605, Taiwan, ROC.}
\address{Math. Division, National Center for Theoretical Science, National Taiwan University, Taipei 10617, Taiwan. ROC.}
\email{jcban@nccu.edu.tw}

\author[Wen-Guei Hu]{Wen-Guei Hu}
\address[Wen-Guei Hu]{College of Mathematics, Sichuan University, Chengdu, 610064, China}
\email{wghu@scu.edu.cn}

\author[Guan-Yu Lai]{Guan-Yu Lai}
\address[Guan-Yu Lai]{Department of Applied Mathematics, National Yang Ming Chiao Tung University, Hsinchu 30010, Taiwan, ROC.}
\email{guanyu.am04g@g2.nctu.edu.tw}

\keywords{Multiplicative shift}

\thanks{Ban is partially supported by the Ministry of Science and Technology, ROC (Contract MOST 111-2115-M-004-005-MY3). Hu is partially supported by the National Natural Science Foundation of China (Grant 11601355). Lai is partially supported by the Ministry of Science and Technology, ROC (Contract MOST 111-2811-M-004-002-MY2).}


\baselineskip=1.2\baselineskip

\begin{abstract}
In this article, we introduce the concept of the boundary complexity and prove that for a 2-multiplicative integer system (2-MIS) $X^{p}_{\Omega}$ on $\mathbb{N}$ (or $X^{\bf p}_{\Omega}$ on $\mathbb{N}^d,d\geq 2$), every point in $[h(X^p_\Omega), \log r]$ can be realized as a boundary complexity of a 2-MIS with a specific speed, where r stands for the number of the alphabets. The result is new and quite different from $\mathbb{N}^d$ subshifts of finite type (SFT) for $d\geq 1$. Furthermore, the rigorous formula of surface entropy for a $\mathbb{N}^d$ 2-MIS is also presented. This provides an efficient method to calculate the topological entropy for $\mathbb{N}^d$ 2-MIS and also provides an intrinsic differences between $\mathbb{N}^d$ $k$-MIS and SFTs for $d\geq 1$ and $k\geq 2$.	
\end{abstract}
\maketitle

\tableofcontents
\section{Introduction}
\subsection{Multiplicative integer systems}

Let $\mathcal{A}$ be an alphabet and $\Omega \subseteq \mathcal{A}^{\mathbb{N}}$ be a subshift or a subset of $\mathcal{A}^{\mathbb{N}}$. For $\mathbf{p}_{1},\ldots ,\mathbf{p}_{k-1}\in \mathbb{N}^{d}$, $d\geq 1$ the \emph{$k$-multiplicative integer system} ($k$-MIS) with respect to $\Omega $ is defined as 
\begin{equation}
X_{\Omega }^{\mathbf{p}_{1},\ldots ,\mathbf{p}_{k-1}}=\{(x_{\mathbf{i}})_{\mathbf{i}\in \mathbb{N}^{d}}\in \mathcal{A}^{\mathbb{N}^{d}}:x_{\mathbf{i}}x_{\mathbf{ip}_{1}}\cdots x_{\mathbf{ip}_{k-1}}\in \Omega _{k}\text{ for
	all }\mathbf{i}\in \mathbb{N}^{d}\}\text{,}  \label{2}
\end{equation}
where $\Omega _{k}$ denotes the set of admissible blocks of length $1<k\in \mathbb{N}$. Suppose $T:X\rightarrow X$ is a map from $X$ to $X$, the study of (\ref{2}) motivated by the multifractal analysis of the following multiple
ergodic average (\ref{4}), which is a significant and active research topic
of the multiple ergodic theory (cf. \cite{fan2014some,
	kifer2010nonconventional, kifer2014nonconventional,
	furstenberg2014recurrence, furstenberg1982ergodic}). Namely, 
\begin{equation}
\frac{1}{n}S_{n}\mathbb{F}(x)=\frac{1}{n}
\sum_{k=1}^{n}f_{1}(T^{k}(x))f_{2}(T^{k}(x))\cdots f_{d}(T^{k}(x))\text{,}
\label{4}
\end{equation}
where $\mathbb{F}=(f_{1},f_{2},\ldots ,f_{d})$ is a $d$-tuple of functions
and $f_{i}:X\rightarrow \mathbb{R}$ for $1\leq i\leq d$. Fan et al. \cite{fan2012level} calculated the Hausdorff and Minkowski dimensions of (\ref{2}) for $(d,k)=(1,2)$ and $\Omega $ is an $1$-$d$ \emph{golden-mean shift} of $\{0,1\}^{\mathbb{N}}$, that is, the forbidden set of $\Omega $ is $\{11\}$.
The Minkowski dimension for general $k$-MIS is given in \cite{ban2019pattern}. Kenyon et al. \cite{kenyon2012hausdorff} extended the result of \cite{fan2012level} to general subshifts, and call
such a shift a \emph{multiplicative shift }because of the fact that it is invariant under the action of multiplicative integers, i.e., for $p\in 
\mathbb{N}$, $(x_{i})_{i=1}^{\infty }\in X_{\Omega }^{p}$ implies $(x_{ri})_{i=1}^{\infty }\in X_{\Omega }^{p}$ for all $r\in \mathbb{N}$.
Meanwhile, $X_{\Omega }^{p}$ is also called \emph{multiplicative SFT} if $\Omega $ is a SFT. Peres et al. \cite{peres2014dimensions} considered the set 
\begin{equation}
X_{\Omega }^{(S)}=\{x\in \{x_{k}\}_{k=1}^{\infty }\in \mathcal{A}^{\mathbb{N}}:x|_{iS}\in \Omega \mbox{ for all } i\text{ with }(i,s)=1\mbox{ for all }s\in S \},  \label{3}
\end{equation}
where $S=\langle p_{1},\ldots ,p_{j}\rangle $ is the semigroup generated by distinct primes $p_{1},\ldots ,p_{j}$. Both Hausdorff and Minkowski dimensions are computed there. As for the multifractal results of the multiple ergodic averages, Peres and Solomyak \cite{peres2012dimension}
established the Hausdorff dimension formula of the set 
\begin{equation}
E(\alpha )=\{\left( x_{k}\right) _{k=1}^{\infty }\in \{0,1\}^{\mathbb{N}}:\lim_{n\rightarrow \infty }\frac{1}{n}\sum_{k=1}^n x_{k}x_{2k}=\alpha \},\text{ }\alpha \in \lbrack 0,1].  \label{5}
\end{equation}
Fan, Schmeling and Wu \cite{fan2016multifractal} extended the result of \cite{peres2012dimension} to the general multiple ergodic averages, namely, the
multiple ergodic average of $\lim_{n\rightarrow \infty }\frac{1}{n}\sum_{k=1}^{n}x_{k}x_{2k}=\alpha $ in (\ref{5}) is replaced by the general
form 
\[
\lim_{n\rightarrow \infty }\frac{1}{n}\sum_{k=1}^{n}\phi
(x_{k},x_{kq},\ldots ,x_{kq^{l-1}})=\alpha,
\]
where $\phi:\mathcal{A}^l\mapsto \mathbb{R}$ is a function from $\mathcal{A}^l$ into $\mathbb{R}$. 

Suppose $\mathcal{A}=\{+1,-1\}$, Carinci et al. \cite{carinci2012nonconventional} and Chazottes and Redig \cite{chazottes2014thermodynamic} studied the large deviation principle and
thermodynamic limit of the free energy function associated by the multiple
sum $S_{n}(\sigma )=\sum_{k=1}^{n}\sigma _{k}\sigma _{2k}$ as a Hamiltonian
in the lattice spin system on $\mathcal{A}^{\mathbb{Z}}$. We emphasize that
this is the simplest version of the `multiplicative Ising model'. Such a Hamiltonian is therefore a long-range non-translation invariant interaction and thus much
more difficult to treat.

There are only few results for multidimensional multiplicative shifts (\ref{2}), i.e., $d\geq 2$. Ban et al.\cite{ban2021large} generalized the large
deviation result of \cite{carinci2012nonconventional} to $d$, $k\geq 2$. The
Minkowski dimension of (\ref{2}) for golden-mean shift $\Omega $ is
calculated in \cite{ban2021entropy}. Brunet \cite{brunet2021dimensions}
considered the set 
\[
X_{\Omega }^{[p]}=\{(x_{k},y_{k})_{k=1}^{\infty }\in \left( \mathcal{A}_{1}\times \mathcal{A}_{2}\right) ^{\mathbb{N}}:(x_{ip^{l}},y_{ip^{l}})_{l=0}^{\infty }\in \Omega \text{ }\forall i\text{
	with }p\nmid i\}\text{,}
\]%
where $\mathcal{A}_{i}=\{0,\ldots ,m_{i}-1\}$ for $i=1,2$ and $m_{1}\geq
m_{2}\geq 2$, and calculated the Hausdorff and Minkowski dimensions. The main
objective of this study is to demonstrate that the set $X_{\Omega }^{\mathbf{p}_{1},\ldots ,\mathbf{p}_{k-1}}$ presents a very different phenomena from
the $\mathbb{Z}^{d}$ SFTs for $d\geq 1$ in two aspects, namely, the `boundary complexity' and `surface entropy'.

\subsection{Entropy and boundary complexity}
The first part of this work focuses on the possible values of the \emph{boundary complexity}, say $h^{\mathbf{f}}$. Before providing the formal definition of $h^{\mathbf{f}}$, we should offer the motivation behind this study.
Let $(X,T)$ be a dynamical system, the study of the subsystems of $(X,T)$ is an interesting and fundamental research subject since it makes it possible to understand the richness of the subdynamics inside $(X,T)$. The famous Krieger embedding theorem \cite{krieger1982subsystems} indicates that if $(X,\sigma _{X})$ is a mixing $\mathbb{Z}$ SFT and $(Y,\sigma _{Y})$ is a $\mathbb{Z}$ subshift with no periodic points and $h(\sigma _{Y})<h(\sigma_{X})$, then $(Y,\sigma _{Y})$ is topologically conjugated to a subshift contained in $(X,\sigma _{X})$. Desai \cite{desai2006subsystem} has demonstrated that a $\mathbb{Z}^{d}$ SFT with positive entropy has rich SFT subsystems.
Precisely, she proved that a $\mathbb{Z}^{d}$ SFT $X$ such that $h(X)>0$, then the set $\{h(Y):Y\subset X$ and $Y$ is an SFT$\}$ is dense in $[0,h(Y)]$. Such a result has been recently extended to large classes of groups, i.e., amenable groups \cite{bland2022subsystem}. They have proved that if $X$ is a $G$ SFT, where $G$ is a countable amenable group, and $Y\subset X$ is any subsystem such that $h(Y)<h(X)$. Then $\{h(Z):Y\subset Z\subset X$ is an SFT$
\}$ is dense in $[h(Y),h(X)]$. In the same spirit of the aforementioned results,
the aim of this study is to see the complexity of a multiplicative integer
system at the border and the richness of such complexity.

Suppose $F\subseteq \mathbb{N}^{d}$ is a finite lattices. We define $\mathcal{P}(F,X)=\{\left( x_{\mathbf{i}}\right) _{\mathbf{i}\in F}\in 
\mathcal{A}^{F}:x\in X\}$, i.e., the \emph{projection} of $x\in X$ on $F$.
Let $\mathbf{f=}\left( f_{1},\ldots ,f_{d}\right) $ be a $d$-tuple of
functions and $f_{i}$ is a function from $\mathbb{N}$ to $\mathbb{N}$, $%
\forall i=1,\ldots ,d$. Denote $\mathbf{f}_{\mathbf{m}%
}=(f_{1}(m_{1}),f_{2}(m_{2}),\ldots ,f_{d}(m_{d}))$, where $\mathbf{m}%
=(m_{1},m_{2},\ldots ,m_{d})\in \mathbb{N}^{d}$ and $F_{\mathbf{m}}(\mathbf{f%
})=\mathbb{N}_{\mathbf{m}}\backslash \mathbb{N}_{\mathbf{f}_{\mathbf{m}%
}}\subset \mathbb{N}^{d}$. The \emph{boundary complexity }$h^{\mathbf{f}}(X)$
is defined as 
\begin{equation}
h^{\mathbf{f}}(X)=\lim_{\mathbf{m}\rightarrow \infty }\frac{\log \left\vert 
	\mathcal{P}(F_{\mathbf{m}}(\mathbf{f}),X)\right\vert }{\left\vert F_{\mathbf{%
			m}}(\mathbf{f})\right\vert }\text{,}  \label{1}
\end{equation}%
whenever the limit of (\ref{1}) exists and the notation $\mathbf{m}%
\rightarrow \infty $ means that $m_{i}\rightarrow \infty $ $\forall
i=1,\ldots ,d$. It is worth pointing out that if $f_{i}\equiv 0$ for all $%
i=1,\ldots ,d$, then $h^{\mathbf{f}}(X)=h(X)$, that is, it equals the usual
topological entropy of $X$. The purpose of this article is to investigate
the possible values of $h^{\mathbf{f}}(X^{\mathbf{p}}_{\Omega})$ and to provide a
connection between $h^{\mathbf{f}}(X^{\mathbf{p}}_{\Omega})$ and $h(X)$, where $X$ is
the $\mathbb{Z}^{d}$ SFT. Specificically, we consider the function $\mathbf{f%
}\longmapsto h^{\mathbf{f}}(X)$ and concentrate on the set 
\[
\mathcal{H}^{B}(X)=\{h^{\mathbf{f}}(X):\mathbf{f}\text{ is a }d\text{-tuple
	of functions from }\mathbb{N}\text{ into }\mathbb{N}\}\text{.}
\]%
In contrast to the works of Desai \cite{desai2006subsystem} and Bland et al. \cite{bland2022subsystem} as mentioned in the last paragraph, the main reason
for studying $\mathcal{H}^{B}(X)$ is as follows. Note that the number $%
\left\vert \mathcal{P}(F_{\mathbf{m}}(\mathbf{f}),X)\right\vert $ is the
possible number of patterns appearing on the `boundary' $F_{\mathbf{m}}(%
\mathbf{f})$, where $\mathbf{f}$ indicates the speed of the width of the
boundary. Such a problem demonstrates the richness of the growth rate of
patterns with respect to the width of the boundary (i.e., $\left\vert F_{%
	\mathbf{m}}(\mathbf{f})\right\vert $) according to the rule of $X^{\mathbf{p}%
}$.

Let $\mathcal{A}=\{0,\ldots ,r-1\}$ and $d=1$. Theorem \ref{thm real 1d} reveals that
every $h\in \lbrack h(X_{\Omega }^{p}),\log r]$ can be realized as an $h^{%
	\mathbf{f}}(X_{\Omega }^{p})$ with a specific speed $\mathbf{f}=f_{1}$,
namely, $h^{\mathbf{f}}(X^{p}_{\Omega})=h$ with certain $\tau=\lim_{m\rightarrow \infty
}f_{1}(m)/m$. In other words, this means $\mathcal{H}^{B}(X^{p}_{\Omega})=[h(X_{%
	\Omega }^{p}),\log r]$. The same result is also valid for $d\geq 2$ (Theorem \ref{thm real nd}). We emphasize that $G$ is an amenable group and $X$ is an $G$ SFT. If 
$\{F_{n}\}_{n\geq 1}$ is a \emph{F\o lner sequence} of $G$ (cf. \cite%
{ceccherini2010cellular}), i.e., $\lim_{n\rightarrow \infty }\frac{%
	\left\vert KF_{n}\bigtriangleup F_{n}\right\vert }{\left\vert
	F_{n}\right\vert }=0$ for every finite subset $K\subset G$, then the topological
entropy $h(X)$ can be calculated as $h(X)=\lim_{n\rightarrow \infty }\frac{%
	\log \left\vert \mathcal{P}(F_{n},X)\right\vert }{\left\vert
	F_{n}\right\vert }$, and is independent of the choice of the F\o lner
sequence $\{F_{n}\}_{n\geq 1}$. It is known that $\mathbb{Z}^{d}$ is an
amenable group and since $F_{\mathbf{m}}(\mathbf{f})$ has a larger chance of being a F\o lner sequence\footnote{%
	For instance, for $\mathbf{f}$ with the minimal width $\min
	\{m-f_{i}(m):1\leq i\leq d\}\rightarrow \infty $.}, the property of $%
\mathcal{H}^{B}(X)=[h(X),\log r]$ may not be true if $X$ is an SFT on $%
\mathbb{Z}^{d}$, $d\geq 2$ in general since $\log r$ might not be achieved. Nonetheless, Corollary \ref{thm add constant hbc 2d} shows that
for $i\in \mathbb{N}$, and every $h\in \lbrack \frac{1}{i}\log \lambda _{\mathbf{H}_{i}(X)},\frac{1}{i}\log \lambda _{\mathbf{V}_{i}(X)}]$, there
exists a $t\in [0,1]$ and $\mathbf{f}$ such that $h^{\mathbf{f}}(X)=h=\frac{%
	(1-t)}{i}\log \lambda _{\mathbf{V}_{i}(X)}+\frac{t}{i}\log \lambda _{\mathbf{%
		H}_{i}(X)}$. Here $\mathbf{H}_{i}(X)$ (resp. $\mathbf{V}_{i}(X)$) is the
transition matrix of the horizontal (resp. vertical) \emph{strip shift} $%
H_{i}(X)$ (resp. $V_{i}(X)$) defined on $\mathbb{N\times }\{1,\ldots ,n\}$ (we
refer the reader to \cite{pavlov2012approximating, ban2005patterns} for the
formal definition and properties of strip shifts therein). It is known that
the entropies of the strip shift of $H_{i}(X)$ (or $V_{i}(X)$) approximates
the topological entropy $h(X)$, that is, $\lim_{i\rightarrow \infty }\frac{%
	\log \lambda _{\mathbf{H}_{i}(X)}}{i}=\lim_{i\rightarrow \infty }\frac{\log
	\lambda _{\mathbf{V}_{i}(X)}}{i}=h(X)$. Combining this with Corollary \ref{thm add constant hbc 2d}
yields that $\overline{\mathcal{H}^{B}(X)}=\{h(X):X$ is an SFT on $\mathbb{N}^{2}\}$, which provides a profound connection to the possible values of $h^{%
	\mathbf{f}}(X)$ and $h(X)$. It is worth pointing out that the speed function $%
\mathbf{f}=(f_{1},f_{2})$ in Corollary \ref{thm add constant hbc 2d} makes $F_{\mathbf{m}}(\mathbf{f})$
a constant width, which is different from the cases of multiplicative
integer systems described in Theorem \ref{thm real 1d} (or Theorem \ref{thm real nd}). Moreover, the general version for $\mathbb{N}^d$ is obtained in Theorem \ref{thm Nd sft bd}.

\subsection{Surface entropy}

The second part of this work concerns the surface entropy of $X^{\mathbf{p}}_\Omega$ on $\mathbb{N}^{2}$. Let $X$ be a subshift, the \emph{surface
	entropy} of $X$ with eccentricity $\alpha $ is 
\[
h_{s}(X,\alpha )=\sup_{\{(x_{n},y_{n})\}\in \Gamma _{\alpha
}}\limsup\limits_{n\rightarrow \infty }S_{X}(x_{n},y_{n})\text{,}
\]%
where 
\[
S_{X}(x_{n},y_{n})=\frac{\log \left\vert \mathcal{P}(\mathbb{Z}%
	_{x_{n},y_{n}},X)\right\vert -x_{n}y_{n}h(X)}{x_{n}+y_{n}},
\]%
and $\Gamma _{\alpha }=\{\{\left( x_{n},y_{n}\right) \}\in \left( \mathbb{N}%
^{2}\right) ^{\mathbb{N}}:\frac{y_{n}}{x_{n}}\rightarrow \alpha $ and $%
x_{n}\rightarrow \infty \}$. The investigation of the surface entropy is to
look at the `linear term' of the complexity function $\log \left\vert 
\mathcal{P}(\mathbb{Z}_{x_{n},y_{n}},X)\right\vert $. Pace \cite%
{pace2018surface} first introduced the concept of the surface entropy and
obtained the explicit formula for $\mathbb{Z}$ SFTs and many
interesting properties for $\mathbb{Z}^{2}$ SFTs. The characterization of
the possible surface entropies of $\mathbb{Z}^{2}$ SFTs (or $\mathbb{Z}^{2}$ sofic subshifts) as the $\Pi _{3}$ real numbers of $[0,+\infty ]$ is
presented in \cite{callard2021computational}. A deep link between the
surface entropy and entropy dimension is given in \cite{meyerovitch2011growth}. Let $X_{\Omega }^{\mathbf{p}}$ be a 2-MIS on $\mathbb{N}^{2}$, Theorem \ref{theorem main} (1) provides a rigorous
formula for $\log \left\vert \mathcal{P}(\mathbb{Z}_{x_{n},y_{n}},X_{\Omega
}^{\mathbf{p}})\right\vert -x_{n}y_{n}h(X_{\Omega }^{\mathbf{p}})$. Using
this result, we could provide a rigorous formula for the complexity fnction $%
\log \left\vert \mathcal{P}(\mathbb{Z}_{x_{n},y_{n}},X_{\Omega }^{\mathbf{p}%
})\right\vert $ with specific $x_{n}$ and $y_{n}$ (Theorem \ref{theorem main} (2) and (3)).
Surprisingly, if $x_{n}=y_{n}=p^{n}\pm k$ with $1\leq k\leq p$, $\log
\left\vert \mathcal{P}(\mathbb{Z}_{x_{n},y_{n}},X_{\Omega }^{\mathbf{p}%
})\right\vert =x_{n}y_{n}h(X_{\Omega }^{\mathbf{p}})+O(n)$ (Theorem \ref{theorem main}
(3)), this provides us with an efficient and accurate estimate of the growth rate of
patterns $\left\vert \mathcal{P}(\mathbb{Z}_{x_{n},y_{n}},X_{\Omega }^{%
	\mathbf{p}})\right\vert $ and this result is different than $\mathbb{Z}^{2}$ SFTs. Finally, the $\mathbb{N}^{d}$ version of Theorem \ref{theorem main} is presented in
Theorem \ref{thm 2.1 nd}.

\section{Boundary complexity}
\subsection{Preliminaries}
Define the 2-multiplicative integer system on $\mathbb{N}^d$ by
\begin{equation*}
	X^{\bf p}_{\Omega}=\left\{ \left(x_{\bf i}\right)_{{\bf i}\in\mathbb{N}^d} \in \left\{0,1,...,r-1\right\}^{\mathbb{N}^d}: \left(x_{{\bf i} \cdot {\bf p}^{\ell} }\right)_{\ell=0}^\infty\in \Omega \mbox{ for all }{\bf i} \in \mathbb{N}^d \right\},
\end{equation*}
where ${\bf p}=\left(p_1,...,p_d \right)$ and $\Omega$ is a subshift. Given $p_1,...,p_d\geq2$ and $N_1,...,N_d\geq 1$, we let $\mathcal{M}_{{\bf p}}=\left\{ (p_1^m,...,p_d^m) :m\geq 0\right\}$ be the subset of $\mathbb{N}^d$, and denote by $\mathcal{M}_{\bf p}({\bf i})$ a version of the lattice $\mathcal{M}_{\bf p}$ starting from ${\bf i}\in \mathbb{N}^d$, i.e. $\mathcal{M}_{{\bf p}}({\bf i})=\left\{ (i_1p_1^m,...,i_dp_d^m) :m\geq 0\right\}$. Finally we define $\mathcal{I}_{{\bf p}}=\left\{ {\bf i}\in \mathbb{N}^d : p_j\nmid i_j\mbox{ for some }1\leq j \leq d \right\}$ as the index set of $\mathbb{N}^d$.

More definitions are needed to characterize the partition of the $N_1\times \cdots \times N_d$ lattice. Let $\mathbb{Z}_{N_1\times \cdots \times N_d}=\left\{ {\bf i}\in \mathbb{N}^d : 1\leq i_j\leq N_j \mbox{ for all } 1\leq j \leq d  \right\}$ be the $N_1\times \cdots \times N_d$ lattice and $\mathcal{L}_{N_1\times \cdots \times N_d}({\bf i})= \mathcal{M}_{{\bf p}}({\bf i})\cap \mathbb{Z}_{N_1\times \cdots \times N_d}$ be the subset of $\mathcal{M}_{\bf p}({\bf i}) $ in the $N_1\times \cdots \times N_d$ lattice. Then we define $\mathcal{J}_{N_1\times \cdots \times N_d;\ell}= \left\{ {\bf i}\in \mathbb{Z}_{N_1\times \cdots \times N_d} : \left|\mathcal{L}_{N_1\times \cdots \times N_d}({\bf i})\right|=\ell  \right\}$, where $| \cdot |$ denotes cardinality, as the set of points ${\bf i}$ in the $N_1\times \cdots \times N_d$ lattice such that the cardinality of the set $\mathcal{M}_{\bf p}({\bf i})\cap \mathbb{Z}_{N_1\times \cdots \times N_d}$ is exactly $\ell$. Let $\mathcal{K}_{N_1\times \cdots \times N_d;\ell}=\{ {\bf i}\in \mathcal{I}_{{\bf p}}\cap \mathbb{Z}_{N_1\times \cdots \times N_d} : |\mathcal{L}_{N_1\times \cdots \times N_d}({\bf i})|=\ell  \}$ be the set of points ${\bf i}$ in $\mathcal{I}_{\bf p}$ so that the cardinality of the set $\mathcal{M}_{\bf p}({\bf i})\cap \mathbb{Z}_{N_1\times \cdots \times N_d}$ is exactly $\ell$. The following lemmas give the disjointed decomposition of $\mathbb{N}^d$ and the limit of the density of $\mathcal{K}_{N_1\times \cdots \times N_d;\ell}$ which are the $\mathbb{N}^d$ version of Lemmas 2.1 and 2.2 \cite{ban2021entropy}, respectively.

\begin{lemma}\label{lemma:2.2}
	For $p_1,...,p_d\geq2$,
	\begin{equation*}
		\mathbb{N}^d=\displaystyle\bigsqcup\limits_{{\bf i} \in \mathcal{I}_{{\bf p}}}\mathcal{M}_{{\bf p}}({\bf i}).
	\end{equation*}  
\end{lemma}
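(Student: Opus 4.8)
The plan is to decompose $\mathbb{N}^d$ by tracking, in each coordinate, the exact power of $p_j$ dividing the entry, and to observe that the single shared exponent appearing in the definition of $\mathcal{M}_{\bf p}({\bf i})$ forces this power to be read off as a minimum across coordinates. For ${\bf n}=(n_1,\dots,n_d)\in\mathbb{N}^d$ I would write $v_j=v_{p_j}(n_j)$ for the $p_j$-adic valuation of $n_j$, i.e. the largest integer with $p_j^{v_j}\mid n_j$, and set $m({\bf n})=\min_{1\le j\le d} v_j$. The claim to establish is that ${\bf n}$ belongs to exactly one set $\mathcal{M}_{\bf p}({\bf i})$, namely the one indexed by ${\bf i}={\bf n}/{\bf p}^{m({\bf n})}:=(n_1/p_1^{m({\bf n})},\dots,n_d/p_d^{m({\bf n})})$.

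For the covering part, first note that $m({\bf n})\le v_j$ for every $j$, so $p_j^{m({\bf n})}\mid n_j$ and each entry $i_j=n_j/p_j^{m({\bf n})}$ is a positive integer; thus ${\bf i}\in\mathbb{N}^d$ and ${\bf n}=(i_1 p_1^{m({\bf n})},\dots,i_d p_d^{m({\bf n})})\in\mathcal{M}_{\bf p}({\bf i})$. To see ${\bf i}\in\mathcal{I}_{\bf p}$, I would pick an index $j^\ast$ attaining the minimum, $v_{j^\ast}=m({\bf n})$; then $v_{p_{j^\ast}}(i_{j^\ast})=v_{j^\ast}-m({\bf n})=0$, so $p_{j^\ast}\nmid i_{j^\ast}$ and the defining condition of $\mathcal{I}_{\bf p}$ is met. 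Hence every ${\bf n}\in\mathbb{N}^d$ lies in some $\mathcal{M}_{\bf p}({\bf i})$ with ${\bf i}\in\mathcal{I}_{\bf p}$.

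For disjointness I would argue by uniqueness of the exponent. Suppose ${\bf n}\in\mathcal{M}_{\bf p}({\bf i})$ for some ${\bf i}\in\mathcal{I}_{\bf p}$, say $n_j=i_j p_j^{m}$ for a common $m\ge 0$. Taking valuations gives $v_j=m+v_{p_j}(i_j)\ge m$ for every $j$, and since ${\bf i}\in\mathcal{I}_{\bf p}$ there is at least one coordinate with $p_j\nmid i_j$, i.e. $v_{p_j}(i_j)=0$, forcing $v_j=m$ there. Combining the two inequalities yields $m=\min_j v_j=m({\bf n})$, so the exponent is uniquely determined, and then ${\bf i}={\bf n}/{\bf p}^{m({\bf n})}$ is determined as well. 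Consequently no ${\bf n}$ can belong to two distinct members of the family, and the union is disjoint.

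The argument is essentially routine once the right invariant is identified; the only point requiring genuine care is the coupling across coordinates. Because a single exponent $m$ must simultaneously divide every $n_j$ while leaving at least one quotient coprime to its prime, $m$ is pinned to the minimum of the valuations rather than to the valuation in any fixed coordinate—this is the one place where the $d\ge 2$ statement differs in flavor from the one-dimensional prototype of \cite{ban2021entropy}. I expect no further obstacle, and the same valuation bookkeeping should reappear when estimating $|\mathcal{L}_{N_1\times\cdots\times N_d}({\bf i})|$ in the companion density lemma.
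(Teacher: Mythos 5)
Your proof is correct and complete: the $p_j$-adic valuation bookkeeping, with the exponent pinned to $m({\bf n})=\min_j v_{p_j}(n_j)$ and uniqueness of both the exponent and the index ${\bf i}={\bf n}/{\bf p}^{m({\bf n})}$, establishes both the covering and the disjointness. The paper itself states this lemma without proof, deferring to its predecessor \cite{ban2021entropy}, and your argument is exactly the standard decomposition that proof rests on, correctly adapted to the only genuinely $d$-dimensional point — that the shared exponent is determined by the minimum of the valuations across coordinates.
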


\begin{lemma}\label{lemma decompose2}
	For $N_1,...,N_d$, and $\ell \geq 1$, we have the following assertions.	
	\item[\bf1.] $\left|\mathcal{J}_{N_1\times N_2\times \cdots \times N_d;\ell}\right|= \prod_{k=1}^{d}\left\lfloor\frac{N_k}{p_k^{\ell-1}}\right\rfloor-\prod_{k=1}^{d}\left\lfloor\frac{N_k}{p_k^\ell}\right\rfloor.$
	\item[\bf2.] $\lim_{N_1,...,N_d\to \infty}\frac{\left| \mathcal{K}_{N_1\times \cdots \times N_d;\ell} \right| }{ \left| \mathcal{J}_{N_1\times \cdots \times N_d;\ell} \right| }=1-\frac{1}{p_1\cdots p_d}.$
	\item[\bf3.] $\lim_{N_1,...,N_d\to \infty}\frac{| \mathcal{K}_{N_1\times \cdots \times N_d;\ell} | }{ N_1\cdots N_d }=\frac{\left(p_1\cdots p_d-1\right)^2}{\left(p_1\cdots p_d\right)^{\ell+1}}.$	
	\item[\bf4.] $\lim_{N_1,...,N_d\to \infty}\sum_{\ell=1}^{N_1\cdots N_d}\frac{\left|\mathcal{K}_{N_1 \times \cdots \times N_d;\ell}\right|\log |\Omega_{\ell}|}{N_1\cdots N_d}
	=\sum_{\ell=1}^{\infty}\lim_{N_1,...,N_d\to \infty}\frac{\left|\mathcal{K}_{N_1\times \cdots \times N_d;\ell}\right|\log |\Omega_{\ell}|}{N_1\cdots N_d}.$		
\end{lemma}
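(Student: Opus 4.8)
The plan is to establish the four assertions in order, the combinatorial core being a single orbit-shifting bijection. First I would prove assertion 1 by a direct count. For ${\bf i}\in\mathbb{Z}_{N_1\times\cdots\times N_d}$, the orbit $\mathcal{M}_{\bf p}({\bf i})$ meets the box in at least $\ell$ points exactly when its $(\ell-1)$-th point $(i_1p_1^{\ell-1},\ldots,i_dp_d^{\ell-1})$ still lies in the box, i.e. when $i_kp_k^{\ell-1}\leq N_k$, equivalently $i_k\leq\lfloor N_k/p_k^{\ell-1}\rfloor$, for every $k$. There are exactly $\prod_{k=1}^d\lfloor N_k/p_k^{\ell-1}\rfloor$ such ${\bf i}$, so subtracting the count for $\ell+1$ from that for $\ell$ yields the telescoping formula in assertion 1.

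For assertions 2 and 3 the key step is the bijection ${\bf i}\mapsto{\bf i}/{\bf p}$. A point of $\mathcal{J}_{N_1\times\cdots\times N_d;\ell}$ lies outside $\mathcal{K}_{N_1\times\cdots\times N_d;\ell}$ exactly when $p_k\mid i_k$ for all $k$; for such an ${\bf i}$ I put ${\bf j}={\bf i}/{\bf p}$ coordinatewise. Since $\mathcal{M}_{\bf p}({\bf i})=\mathcal{M}_{\bf p}({\bf j})\setminus\{{\bf j}\}$ and ${\bf j}$ itself lies in the box, one gets $|\mathcal{L}({\bf j})|=|\mathcal{L}({\bf i})|+1$, so ${\bf i}\mapsto{\bf j}$ sends $\mathcal{J}_{\ldots;\ell}\setminus\mathcal{K}_{\ldots;\ell}$ bijectively onto $\mathcal{J}_{\ldots;\ell+1}$, with inverse ${\bf j}\mapsto{\bf p}\cdot{\bf j}$ landing in the box precisely because $|\mathcal{L}({\bf j})|\geq2$. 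This gives $|\mathcal{K}_{\ldots;\ell}|=|\mathcal{J}_{\ldots;\ell}|-|\mathcal{J}_{\ldots;\ell+1}|$. Writing $P=p_1\cdots p_d$ and using $\lfloor N_k/p_k^{a}\rfloor/N_k\to p_k^{-a}$ as all $N_k\to\infty$, assertion 1 gives $|\mathcal{J}_{\ldots;\ell}|/(N_1\cdots N_d)\to(P-1)P^{-\ell}$; dividing the identity by $|\mathcal{J}_{\ldots;\ell}|$ produces assertion 2, and dividing by $N_1\cdots N_d$ produces assertion 3.

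Finally, assertion 4 is an interchange of limit and sum, which I would justify by Tannery's theorem (dominated convergence for series). Combining the bound $|\mathcal{K}_{\ldots;\ell}|\leq|\mathcal{J}_{\ldots;\ell}|\leq\prod_k\lfloor N_k/p_k^{\ell-1}\rfloor\leq(N_1\cdots N_d)P^{-(\ell-1)}$ with $|\Omega_\ell|\leq r^{\ell}$, each summand satisfies the $N$-independent bound
\[
\frac{|\mathcal{K}_{N_1\times\cdots\times N_d;\ell}|\log|\Omega_\ell|}{N_1\cdots N_d}\leq\frac{\ell\log r}{P^{\ell-1}},
\]
and $\sum_{\ell\geq1}\ell\log r\,P^{-(\ell-1)}<\infty$ since $P\geq2$; as each term tends to $(P-1)^2P^{-(\ell+1)}\log|\Omega_\ell|$ by assertion 3, Tannery's theorem lets me pass the limit inside the sum. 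The routine parts are the floor-function asymptotics in assertions 1--3; the one genuinely load-bearing idea is the orbit-shift bijection, and the only real analytic point is producing the uniform summable majorant in assertion 4, for which the geometric decay $P^{-(\ell-1)}$ is exactly what is needed.
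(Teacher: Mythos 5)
Your proof is correct, and it follows essentially the same route as the paper, which states this lemma without proof as the $\mathbb{N}^d$ analogue of Lemmas 2.1 and 2.2 of \cite{ban2021entropy}: the monotone-orbit count $\#\{{\bf i}:|\mathcal{L}({\bf i})|\geq\ell\}=\prod_k\lfloor N_k/p_k^{\ell-1}\rfloor$ with telescoping, the division-by-${\bf p}$ bijection giving $|\mathcal{K}_{\cdot;\ell}|=|\mathcal{J}_{\cdot;\ell}|-|\mathcal{J}_{\cdot;\ell+1}|$, and a dominated-convergence (Tannery) argument with the majorant $\ell\log r\,(p_1\cdots p_d)^{-(\ell-1)}$ for the interchange in assertion 4. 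In effect you have supplied the details the paper leaves to the cited reference, and all steps check out.
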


\subsection{The boundary complexity of the 2-MIS on $\mathbb{N}^d$}
In this section, we prove the dense property of the boundary complexity for a 2-MIS on $\mathbb{N}$ (Theorem \ref{thm real 1d}) and $\mathbb{N}^d$ (Theorem \ref{thm real nd}) by establishing the corresponding rigorous formula for boundary complexities (Lemma \ref{lemma hbc1d} for $\mathbb{N}$ and Lemma \ref{lemma hbc2d} for general $\mathbb{N}^d$).

\subsubsection{$d=1$}
Lemma \ref{lemma hbc1d} presents the rigorous formula for a 2-MIS on $\mathbb{N}$.
\begin{lemma}\label{lemma hbc1d}
	For $\ell \geq 1$, if $\frac{m}{p^\ell}< f_1(m) \leq \frac{m}{p^{\ell-1}}$, we have
	\begin{equation*}
			h^{\bf f}\left(X^p_{\Omega} \right)=\left(\frac{1}{1-\tau}\right)\left[\left(1-\frac{1}{p}\right)\zeta_\ell +\eta_\ell	 \right],	
	\end{equation*}
	where $\tau=\lim_{m\to \infty}\frac{f_1(m)}{m}$,
	\[\zeta_\ell=\left(p-1\right)\sum_{i=1}^{\ell-2}\frac{\log |\Omega_i|}{p^i}+\left(\frac{1}{p^{\ell-2}}-p\tau\right)\log |\Omega_{\ell-1}|\]
	and \[\eta_\ell=\left(p\tau-\frac{1}{p^{\ell-1}}\right)\log |\Omega_{\ell-1}|+\left(\frac{1}{p^{\ell-1}}-\tau\right)\log |\Omega_{\ell}|.\]
\end{lemma}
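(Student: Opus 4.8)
The plan is to reduce the count of boundary patterns to a product over multiplicative chains and then to tally those chains by how many of their sites land in the boundary strip. Write $f=f_1(m)$ and $F=\{f+1,\dots,m\}$ for the boundary $F_m(\mathbf f)$. First I would invoke the $d=1$ case of Lemma \ref{lemma:2.2} to decompose $\{1,\dots,m\}$ into the disjoint chains $\mathcal{M}_p(i)\cap\{1,\dots,m\}$, $i\in\mathcal{I}_p$. Since the defining constraint $(x_{ip^k})_k\in\Omega$ couples only sites within a common chain, the projection factorises,
\[
\bigl|\mathcal{P}(F,X^p_\Omega)\bigr|=\prod_{\text{chains }c}\bigl|\Omega_{b(c)}\bigr|,
\]
where $b(c)$ is the number of sites of $c$ inside $F$; here one uses that these sites form a contiguous window of a sequence in $\Omega$, so the admissible patterns on them number $|\Omega_{b(c)}|$. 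Taking logarithms reduces everything to counting, for each $b$, the chains with $b(c)=b$.

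The structural heart of the argument is that every chain has a unique top $t\in(m/p,m]$ (its largest site that is $\le m$), that its length in $\{1,\dots,m\}$ is $v_p(t)+1$ where $v_p$ is the $p$-adic valuation, and that its observed length is
\[
b(c)=\min\!\Bigl(v_p(t)+1,\ \lfloor\log_p(t/f)\rfloor+1\Bigr),
\]
the second entry counting sites exceeding $f$. Feeding in the hypothesis $m/p^\ell<f\le m/p^{\ell-1}$, one gets $\log_p(t/f)\in(\ell-2,\ell)$ for all $t\in(m/p,m]$, so the ``cap'' $\lfloor\log_p(t/f)\rfloor+1$ equals $\ell-1$ for $t\le t^\ast:=fp^{\ell-1}$ and $\ell$ for $t>t^\ast$, with $t^\ast\in(m/p,m]$ and $t^\ast/m\to p^{\ell-1}\tau$. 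This separates the chains into fully observed ones (those with $v_p(t)+1$ below the cap, giving $b=v_p(t)+1\le\ell-1$) and boundary-crossing ones whose observed length is capped at $\ell-1$ or at $\ell$.

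Next I would compute the density of each class using the $d=1$ specialisation of Lemma \ref{lemma decompose2} together with the equidistribution of $v_p$: in any long interval the density of $\{v_p=v\}$ is $(p-1)/p^{v+1}$ and of $\{v_p\ge v\}$ is $1/p^v$. This gives $\lim_m\#\{c:b(c)=b\}/m=(p-1)^2/p^{b+1}$ for $1\le b\le\ell-2$ (the short, fully observed chains). Collecting the fully observed length-$(\ell-1)$ and length-$\ell$ chains together with the capped chains over the intervals $(m/p,t^\ast]$ and $(t^\ast,m]$ then yields the two transitional densities $\tau+\frac{p-2}{p^{\ell-1}}$ at $b=\ell-1$ and $\frac{1}{p^{\ell-1}}-\tau$ at $b=\ell$. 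Substituting into $\frac{1}{m}\log|\mathcal{P}(F,X^p_\Omega)|=\sum_b \frac{\#\{c:b(c)=b\}}{m}\log|\Omega_b|$ reproduces exactly $(1-\frac1p)\zeta_\ell+\eta_\ell$, and dividing by $|F|/m=\frac{m-f}{m}\to 1-\tau$ gives the stated formula.

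The main obstacle I anticipate is the bookkeeping at the two transitional lengths $b=\ell-1$ and $b=\ell$: one must split the length-$\ell$ chains between ``fully observed'' ($t>t^\ast$) and ``capped'' ($t\le t^\ast$), merge the capped contributions of all longer chains ($v_p(t)\ge\ell-1$, resp. $\ge\ell$) over the correct sub-interval, and verify the densities telescope to the clean coefficients above; the identity $(p-1)^2/p^{\ell}-1/p^\ell+\tau=\tau+(p-2)/p^{\ell-1}$ is representative of the simplifications needed. Two further points require care: justifying the per-chain count $|\Omega_{b(c)}|$, which rests on the extension property of the subshift $\Omega$ already implicit behind Lemma \ref{lemma decompose2}, and controlling the $O(1/m)$ errors from floors and interval endpoints so that all the densities above are genuine limits.
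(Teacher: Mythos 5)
Your proposal is correct and takes essentially the same route as the paper's own proof (which is given, for general $d$, as Lemma \ref{lemma hbc2d}): both decompose the lattice into the multiplicative chains of Lemma \ref{lemma:2.2}, classify each chain by the number of its sites lying in the boundary strip, and pass to the density limits of Lemma \ref{lemma decompose2}, with your parametrization by chain tops and $p$-adic valuations being merely the dual of the paper's bookkeeping by nested annuli of chain starting points, and your transitional coefficients at $b=\ell-1$ and $b=\ell$ matching the paper's exactly. The one caveat you flag --- that each observed chain window contributes exactly $\left|\Omega_{b(c)}\right|$ patterns, which needs an extension property of $\Omega$ --- is assumed implicitly by the paper as well, so your argument is at the same level of rigor.
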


\begin{proof}
	The proof of the general cases, i.e., $d\geq1$ is presented in Lemma \ref{lemma hbc2d}, thus we omit it. 
\end{proof}

Using Lemma \ref{lemma hbc1d}, the dense property of the boundary complexity for 2-multiplicative integer system on $\mathbb{N}$ is presented.

\begin{theorem}\label{thm real 1d}
	For any $h\in \left[ h\left(X^p_{\Omega} \right),\log |\Omega_1|\right]$, there is a $\tau$ such that $h$ can be realized by boundary complexity $h^{\bf f}$. That is, $h^{\bf f}\left(X^p_{\Omega} \right)=h$.
\end{theorem}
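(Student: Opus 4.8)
The plan is to use Lemma~\ref{lemma hbc1d} as an explicit, closed-form expression for $h^{\mathbf f}(X^p_\Omega)$ in terms of the single parameter $\tau = \lim_{m\to\infty} f_1(m)/m$, and then to show that as $\tau$ ranges over its admissible interval, this expression sweeps out the entire interval $[h(X^p_\Omega),\log|\Omega_1|]$ continuously. The key observation is that the two endpoints correspond to the two extreme choices of $\tau$: when $\tau=0$ (i.e.\ $f_1$ grows sublinearly, so the boundary is a negligible fraction and $F_{\mathbf m}(\mathbf f)$ approaches a F\o lner sequence) we should recover $h^{\mathbf f}(X^p_\Omega)=h(X^p_\Omega)$, and when $\tau$ is taken as large as possible (pushing $f_1(m)$ up toward $m$, making the boundary strip as thin as possible relative to its inner cutoff) we should recover the maximal value $\log|\Omega_1|$, since a very thin boundary only constrains single-site patterns and the per-site freedom is exactly $|\Omega_1| = r$ letters.

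First I would fix the regime $\ell=1$, where the constraint $\frac{m}{p} < f_1(m) \le m$ holds, since this is the regime whose endpoint gives $\log|\Omega_1|$. Specializing the formula of Lemma~\ref{lemma hbc1d} to $\ell=1$ collapses the sums $\zeta_\ell$ (which is empty or degenerate for small $\ell$) and leaves $h^{\mathbf f}(X^p_\Omega)$ as an explicit rational-plus-logarithmic function $g(\tau)$ of $\tau$ on the interval $\tau\in(1/p,1]$. I would then verify by direct substitution that $g(1)=\log|\Omega_1|$, confirming the upper endpoint is attained. Next I would run the general $\ell$: for each $\ell\ge 1$ the formula gives a function $g_\ell(\tau)$ valid for $\tau$ in the subinterval corresponding to $\frac{m}{p^\ell}<f_1(m)\le\frac{m}{p^{\ell-1}}$, i.e.\ $\tau\in\bigl(\tfrac{1}{p^\ell},\tfrac{1}{p^{\ell-1}}\bigr]$. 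The plan is to check that these pieces agree at the shared endpoints $\tau=1/p^{\ell-1}$, so that the map $\tau\mapsto h^{\mathbf f}(X^p_\Omega)$ is a single continuous function on $(0,1]$.

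The remaining ingredient is the lower endpoint: I would take the limit $\tau\to 0^+$ (equivalently $\ell\to\infty$) and show it converges to the known entropy $h(X^p_\Omega)$. This requires recalling (or quoting from the $\mathbb{N}$ version of Lemma~\ref{lemma decompose2}) the series formula $h(X^p_\Omega)=\bigl(1-\tfrac1p\bigr)^2\sum_{\ell=1}^\infty \tfrac{\log|\Omega_\ell|}{p^{\ell-1}}$ for the topological entropy, and then verifying that the $\tau\to 0$ limit of $g_\ell(\tau)$ reproduces exactly this sum. With continuity on the half-open interval and the two endpoint values in hand, the Intermediate Value Theorem yields that every $h\in[h(X^p_\Omega),\log|\Omega_1|]$ equals $g(\tau)$ for some $\tau$; choosing any speed function $f_1$ realizing that $\tau$ (for instance $f_1(m)=\lfloor \tau m\rfloor$) completes the proof.

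I expect the main obstacle to be two-fold. First, the piecewise matching at the breakpoints $\tau=1/p^{\ell-1}$ must be checked carefully, since $\zeta_\ell,\eta_\ell$ involve both $\log|\Omega_{\ell-1}|$ and $\log|\Omega_\ell|$ with $\tau$-dependent coefficients, and an arithmetic slip there would break continuity; the coefficients are rigged so the $\log|\Omega_{\ell-1}|$ terms telescope correctly across adjacent pieces, and confirming this telescoping is the delicate computation. Second, the lower-endpoint limit is genuinely a convergence-of-series argument rather than a finite substitution: one must justify interchanging the limit $\tau\to 0$ with the implicit infinite summation (the $\ell\to\infty$ behavior), which is where I would lean on part~4 of the $\mathbb{N}$-analogue of Lemma~\ref{lemma decompose2} guaranteeing that the relevant series converges and the limit passes inside. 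Everything else is a routine monotonicity-and-continuity check on an explicit elementary function.
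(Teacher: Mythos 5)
Your proposal is correct and takes essentially the same route as the paper: both rest entirely on Lemma~\ref{lemma hbc1d}, realize each $h$ by selecting $\tau$ in the appropriate regime $\left(\frac{1}{p^{\ell}},\frac{1}{p^{\ell-1}}\right]$, and handle the lower endpoint $h\left(X^p_{\Omega}\right)$ separately via a sublinear speed giving $\tau=0$ (the paper takes $f_1(m)=1$). The only difference is presentational: where you verify breakpoint matching and invoke the Intermediate Value Theorem, the paper solves $g_k(\tau)=h$ explicitly for $\tau$ on each piece, and its breakpoint values $A_k$ are exactly your $g_k\left(1/p^{k-1}\right)$, with the claim $\bigcup_{k\geq 2}\left(A_{k+1},A_k\right]=\left(h\left(X^p_{\Omega}\right),\log|\Omega_1|\right]$ encoding precisely your continuity-across-breakpoints check and the $\tau\to 0^{+}$ limit to the entropy series.
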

\begin{proof}
	For $h=h\left(X^p_{\Omega} \right)$, let $f_1(m)=1$ for all $m\geq 1$. Then we have $\tau=0$ and $h^{\bf f}\left(X^p_{\Omega} \right)=h\left(X^p_{\Omega} \right)$. 
	
	For $h\in \left(  \frac{\left(1-\frac{1}{p}\right)\log |\Omega_1|+\frac{1}{p}\log |\Omega_2|}{1+\frac{1}{p}},\log |\Omega_1|\right]$, let 
	\begin{equation*}
	\frac{1}{p^2}<\tau=\frac{\frac{1}{p}\log |\Omega_2|+\left(1-\frac{2}{p}\right)\log |\Omega_1| -h }{\log |\Omega_2|-\log |\Omega_1|-h}\leq \frac{1}{p}.
	\end{equation*}
Then we have $h^{\bf f}\left(X^p_{\Omega} \right)=h$ by applying Lemma \ref{lemma hbc1d} with $\ell=2$. In general, if $h\in \left(A_{k+1},A_k\right]$,
where
\begin{equation*}
A_k=\left(\frac{1}{1-\frac{1}{p^{k-1}}}\right)\left\{\left(1-\frac{1}{p}\right)\left[\left(p-1\right)\sum_{i=1}^{k-2}\frac{\log |\Omega_i|}{p^i}\right]
+\left(\frac{1}{p^{k-2}}-\frac{1}{p^{k-1}}\right)\log |\Omega_{k-1}| \right\}.
\end{equation*}
Note that $k=2$ is above case and $\bigcup_{k=2}^\infty \left(A_{k+1},A_k\right]=\left(  h\left(X^p_{\Omega} \right),\log |\Omega_1|\right]$. For this case, let
\begin{equation*}
\tau=\frac{\left(1-\frac{1}{p}\right)\left[\left(p-1\right)\sum_{i=1}^{k-2}\frac{\log |\Omega_i|}{p^i}\right]+\left(\frac{1}{p^{k-2}}-\frac{2}{p^{k-1}}\right)\log |\Omega_{k-1}|+\frac{1}{p^{k-1}}\log |\Omega_{k}|-h}{\log |\Omega_{k}|-\log |\Omega_{k-1}|-h},
\end{equation*}
then $\frac{1}{p^k}<\tau\leq \frac{1}{p^{k-1}}$ and thus the proof is complete by Lemma \ref{lemma hbc1d} with $\ell=k$.
\end{proof}

\subsubsection{$d\geq 2$}
This section shows the dense property of 2-MIS on $\mathbb{N}^d$ for $d\geq 2$. For the readers’ convenience, we provide a complete proof for $d=2$, and omit the proof for $d>2$ since the proof is almost identical. Lemma \ref{lemma hbc2d} below provides the explicit formula for the boundary complexity $h^{\bf f}$ of 2-MIS on $\mathbb{N}^2$ with a specific speed ${\bf f}=(f_1, f_2)$.

\begin{lemma}\label{lemma hbc2d}
	For $\ell \geq 1$, if $\frac{m}{p_1^\ell}< f_1(m) \leq \frac{m}{p_1^{\ell-1}}$ and $\frac{n}{p_2^\ell}< f_2(n) \leq \frac{n}{p_2^{\ell-1}}$, we have
	\begin{equation*}
	h^{\bf f}\left(X^{\bf p}_{\Omega} \right)=\left(\frac{1}{1-\tau}\right)\left[\left(1-\frac{1}{p_1p_2}\right)\zeta_\ell+\eta_\ell	 \right],	
\end{equation*}
where $\tau=\lim_{m,n\to \infty}\frac{f_1(m)f_2(n)}{mn}$,
 \[\zeta_\ell=\left(p_1p_2-1\right)\sum_{i=1}^{\ell-2}\frac{\log |\Omega_i|}{\left(p_1p_2\right)^i}+\left(\frac{1}{\left(p_1p_2\right)^{\ell-2}}-p_1p_2\tau\right)\log |\Omega_{\ell-1}|\]and \[\eta_\ell=\left(p_1p_2\tau-\frac{1}{\left(p_1p_2\right)^{\ell-1}}\right)\log |\Omega_{\ell-1}|+\left(\frac{1}{\left(p_1p_2\right)^{\ell-1}}-\tau\right)\log |\Omega_{\ell}|.\]
\end{lemma}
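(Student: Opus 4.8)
The plan is to reduce the count of admissible boundary patterns to a product over the disjoint multiplicative chains furnished by Lemma \ref{lemma:2.2}, and then compute the asymptotic density of chains according to the length of the window they carve out of the boundary region $F_{\bf m}({\bf f})=\mathbb{Z}_{m\times n}\setminus \mathbb{Z}_{f_1(m)\times f_2(n)}$, where ${\bf m}=(m,n)$. By Lemma \ref{lemma:2.2} the lattice $\mathbb{N}^2$ splits into the chains $\mathcal{M}_{\bf p}({\bf i})$, ${\bf i}\in\mathcal{I}_{\bf p}$, and the defining condition of $X^{\bf p}_\Omega$ couples only the symbols on a single chain; hence a pattern on $F_{\bf m}({\bf f})$ is admissible iff its restriction to each chain is, and since $\Omega$ is a subshift the number of admissible words on any $k$ consecutive sites of a chain is $|\Omega_k|$. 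Along a chain both coordinates increase with the exponent $s$, so membership in either rectangle is a prefix condition: the inner rectangle captures steps $0,\dots,a-1$ and the outer steps $0,\dots,b-1$, whence the chain meets $F_{\bf m}({\bf f})$ in the contiguous window $a,\dots,b-1$ of length $k=b-a$. Therefore
\[
\log\bigl|\mathcal{P}(F_{\bf m}({\bf f}),X^{\bf p}_\Omega)\bigr|=\sum_{k}c_k\log|\Omega_k|,\qquad c_k=\#\{{\bf i}\in\mathcal{I}_{\bf p}:b({\bf i})-a({\bf i})=k\}.
\]

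Next I would classify the chains by $k$. The scale hypotheses $f_1(m)\in(m/p_1^{\ell},m/p_1^{\ell-1}]$ and $f_2(n)\in(n/p_2^{\ell},n/p_2^{\ell-1}]$ force, in each coordinate separately, the number of steps gained passing from the inner to the outer bound to be $\ell-1$ or $\ell$; taking the minimum over the two coordinates yields $k\in\{\ell-1,\ell\}$ for every chain meeting the inner rectangle ($a\geq1$), while a chain with $a=0$ satisfies $k=b\leq\ell$. In particular a chain with window $k\leq\ell-2$ must have $a=0$ and $b=k$, so by part~3 of Lemma \ref{lemma decompose2} its density is $\lim|\mathcal{K}_{m\times n;k}|/(mn)=(p_1p_2-1)^2/(p_1p_2)^{k+1}$; dividing by $(1-\tau)$ this already reproduces the coefficients of $\log|\Omega_i|$, $1\leq i\leq\ell-2$, that make up $\zeta_\ell$.

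The main obstacle is the critical layer $k\in\{\ell-1,\ell\}$, where chains of both types contribute and where the parameter $\tau=\lim f_1(m)f_2(n)/(mn)$ — not merely the scale index $\ell$ — must appear. Here I would determine how the inner-meeting chains split between window lengths $\ell-1$ and $\ell$ by matching two densities from Lemma \ref{lemma decompose2}: the density of chains with inner count $a$ is $\tau\,(p_1p_2-1)^2/(p_1p_2)^{a+1}$ (since $f_1(m)f_2(n)\sim\tau mn$), while the density with outer count $b$ is $(p_1p_2-1)^2/(p_1p_2)^{b+1}$. Imposing consistency across the two possible relations $b=a+\ell-1$ and $b=a+\ell$ forces the proportion of inner-meeting chains with the full gap $\ell$ to be $\bigl((p_1p_2)^{-(\ell-1)}/\tau-1\bigr)/(p_1p_2-1)$. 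Summing the resulting densities over all inner counts, adjoining the $a=0$ chains with $b=\ell-1$ and $b=\ell$, and taking care not to double-count the $a=1,\,b=\ell$ chains, gives
\[
\frac{c_{\ell-1}}{mn}\longrightarrow (p_1p_2)^{-(\ell-2)}-2(p_1p_2)^{-(\ell-1)}+\tau,\qquad \frac{c_{\ell}}{mn}\longrightarrow (p_1p_2)^{-(\ell-1)}-\tau.
\]

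Finally I would assemble the three families of coefficients. Dividing $\sum_k c_k\log|\Omega_k|$ by $|F_{\bf m}({\bf f})|=mn-f_1(m)f_2(n)\sim(1-\tau)mn$, and invoking part~4 of Lemma \ref{lemma decompose2} to pass the limit inside the infinite sum, the coefficients of $\log|\Omega_i|$ for $i\leq\ell-2$, of $\log|\Omega_{\ell-1}|$, and of $\log|\Omega_\ell|$ collect exactly into $\tfrac{1}{1-\tau}\bigl[(1-\tfrac{1}{p_1p_2})\zeta_\ell+\eta_\ell\bigr]$, proving the claim. The delicate point throughout is the critical-layer bookkeeping: because the ``same exponent'' structure of $\mathcal{M}_{\bf p}({\bf i})$ makes the exit step from each rectangle a minimum over the two coordinates, the window length is not the scale gap of a single coordinate, and the split between $\ell-1$ and $\ell$ is genuinely governed by $\tau$ rather than by $\ell$ alone.
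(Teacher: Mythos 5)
Your overall architecture coincides with the paper's: both arguments reduce the pattern count via the chain decomposition of Lemma \ref{lemma:2.2} to a product of $|\Omega_k|$ over windows of length $k$, both isolate the dichotomy $k\in\{\ell-1,\ell\}$ at the critical layer, and both normalize by $|F_{\bf m}({\bf f})|\sim(1-\tau)mn$. Your terminal densities are exactly the paper's: the paper's coefficient of $\log|\Omega_{\ell-1}|$, obtained from its two regions $\mathbb{Z}_{p_1f_1(m)\times p_2f_2(n)}\setminus\mathbb{Z}_{\lfloor m/p_1^{\ell-1}\rfloor\times \lfloor n/p_2^{\ell-1}\rfloor}$ and the $\mathcal{K}$-portion of $\mathbb{Z}_{\lfloor m/p_1^{\ell-2}\rfloor\times \lfloor n/p_2^{\ell-2}\rfloor}\setminus\mathbb{Z}_{p_1f_1(m)\times p_2f_2(n)}$, simplifies to $(p_1p_2)^{-(\ell-2)}-2(p_1p_2)^{-(\ell-1)}+\tau$, matching your $c_{\ell-1}/(mn)$, and its coefficient of $\log|\Omega_\ell|$ is $(p_1p_2)^{-(\ell-1)}-\tau$, matching your $c_\ell/(mn)$. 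The difference is purely in how the critical layer is counted: the paper counts window \emph{heads} region by region, whereas you count whole chains and split them by a proportion $q$.

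That splitting step is the one genuine gap. Your ``imposing consistency'' inference presupposes (i) that the fraction $q_a$ of inner-count-$a$ chains with window $\ell$ has a limit, and (ii) that it is independent of $a$; neither is established, and the consistency identities $q_a+(1-q_{a+1})/(p_1p_2)=(p_1p_2)^{-\ell}/\tau$ by themselves admit a one-parameter family of formal solutions (the homogeneous perturbations of the recursion grow like $(p_1p_2)^a$, so one must additionally invoke $0\le q_a\le 1$ together with the vanishing of the per-$a$ densities to force uniqueness — and even then the existence of the limits requires a separate argument, or a liminf/limsup sandwich you have not written). The clean repair is the paper's direct count, which bypasses $q$ entirely: a window of length exactly $\ell$ is headed precisely at a point of $\mathbb{Z}_{\lfloor m/p_1^{\ell-1}\rfloor\times\lfloor n/p_2^{\ell-1}\rfloor}\setminus\mathbb{Z}_{f_1(m)\times f_2(n)}$ — from such a point $\ell-1$ further multiplicative steps stay inside $\mathbb{Z}_{m\times n}$, the $\ell$-th step exits because $f_jp_j^{\ell}$ exceeds the side length, and any chain predecessor necessarily lies in the inner rectangle — so $c_\ell/(mn)\to(p_1p_2)^{-(\ell-1)}-\tau$ at once, after which $c_{\ell-1}$ follows from the total chain density $1-\frac{1}{p_1p_2}$ together with your (correct) values of $c_k$, $k\le\ell-2$. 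Two minor points: since windows have length at most $\ell$, no interchange of limit and infinite sum is needed here, so your appeal to part 4 of Lemma \ref{lemma decompose2} is superfluous; and you should note how the degenerate cases $\ell=1,2$ (empty sums in $\zeta_\ell$) are handled, which the paper treats separately.
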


\begin{proof}
	The proof is divided into three parts, namely, $\ell=1$, $\ell=2$, and $\ell\geq 3$. For $\ell=1$, all the boundary points in this area belong to different independent layers. For $\ell=2$, the boundary points are divided into 3 parts: the area containing the independent layers with cardinality 1, i.e. $\mathbb{Z}_{p_1f_1(m)\times p_2f_2(n)}\setminus\mathbb{Z}_{\left\lfloor\frac{m}{p_1}\right\rfloor\times \left\lfloor\frac{n}{p_2}\right\rfloor}$, the area containing layers that intersect $ \mathbb{Z}_{\left\lfloor\frac{m}{p_1}\right\rfloor\times \left\lfloor\frac{n}{p_2}\right\rfloor}\setminus \mathbb{Z}_{f_1(m)\times f_2(n)}$ nonempty, and the area containing the independent layers with cardinality 2, i.e., $ \mathbb{Z}_{\left\lfloor\frac{m}{p_1}\right\rfloor\times \left\lfloor\frac{n}{p_2}\right\rfloor}\setminus \mathbb{Z}_{f_1(m)\times f_2(n)}$. For $\ell\geq 3$, the boundary area will be divided into 4 parts: the area containing independent layers with cardinality $\ell$, i.e. $ \mathbb{Z}_{\left\lfloor\frac{m}{p_1^{\ell-1}}\right\rfloor\times \left\lfloor\frac{n}{p_2^{\ell-1}}\right\rfloor}\setminus \mathbb{Z}_{f_1(m)\times f_2(n)}$, the area containing independent layers with cardinality $\ell-1$ and that intersect $\mathbb{Z}_{\left\lfloor\frac{m}{p_1^{\ell-1}}\right\rfloor\times \left\lfloor\frac{n}{p_2^{\ell-1}}\right\rfloor}\setminus \mathbb{Z}_{f_1(m)\times f_2(n)}$ nonempty, i.e. $\mathbb{Z}_{\left\lfloor\frac{m}{p_1^{\ell-2}}\right\rfloor\times \left\lfloor\frac{n}{p_2^{\ell-2}}\right\rfloor}\setminus\mathbb{Z}_{p_1f_1(m)\times p_2f_2(n)}$, the area containing independent layers with cardinality only $\ell-1$, i.e. $\mathbb{Z}_{p_1f_1(m)\times p_2f_2(n)}\setminus\mathbb{Z}_{\left\lfloor\frac{m}{p_1^{\ell-1}}\right\rfloor\times \left\lfloor\frac{n}{p_2^{\ell-1}}\right\rfloor}$, and the remainder area containing the independent layers with cardinality $i, 1\leq i \leq \ell-2$.       
	\item[\bf1.] If $\ell=1$ and $1\geq \tau >\frac{1}{p_1p_2}$, then we have 
	\begin{equation*}
	\log \left|\mathcal{P}\left(F_{\bf m}({\bf f}),X^{\bf p}_{\Omega}\right)\right|=\log |\Omega_1|^{\left| \mathbb{Z}_{m\times n}\setminus \mathbb{Z}_{f_1(m)\times f_2(n)}\right|}.
	\end{equation*} 
This implies $h^{\bf f}\left(X^{\bf p}_{\Omega} \right)=\log |\Omega_1|=\log r$.
	\item[\bf2.] If $\ell=2$ and $\frac{1}{p_1p_2}\geq \tau >\frac{1}{\left(p_1p_2\right)^2}$, then we have
	\begin{equation*}
		\begin{aligned}
		&\log \left|\mathcal{P}\left(F_{\bf m}({\bf f}),X^{\bf p}_{\Omega}\right)\right|\\
		&=\log |\Omega_1|^{\left| \mathbb{Z}_{m\times n}\setminus \mathbb{Z}_{p_1f_1(m)\times p_2f_2(n)}\right|\frac{\left| \mathcal{K}_{m\times n;1}\setminus \mathbb{Z}_{p_1f_1(m)\times p_2f_2(n)} \right|}{\left| \mathbb{Z}_{m\times n}\setminus \mathbb{Z}_{p_1f_1(m)\times p_2f_2(n)}\right|}}\\
		&+\log |\Omega_1|^{\left|  \mathbb{Z}_{p_1f_1(m)\times p_2f_2(n)}\setminus\mathbb{Z}_{\left\lfloor\frac{m}{p_1}\right\rfloor\times \left\lfloor\frac{n}{p_2}\right\rfloor}\right|}+\log |\Omega_2|^{\left| \mathbb{Z}_{\left\lfloor\frac{m}{p_1}\right\rfloor\times \left\lfloor\frac{n}{p_2}\right\rfloor}\setminus \mathbb{Z}_{f_1(m)\times f_2(n)}\right|}.
		\end{aligned}
	\end{equation*}

Since $m-p_1f_1(m)\to\infty$ and $n-p_2f_2(n)\to\infty$ as $m,n\to\infty$, then by the similar proof of Lemma \ref{lemma decompose2}, we have 
\begin{equation*}
\lim_{n\to\infty}\frac{\left| \mathcal{K}_{m\times n;1}\setminus \mathbb{Z}_{p_1f_1(m)\times p_2f_2(n)} \right|}{\left| \mathbb{Z}_{m\times n}\setminus \mathbb{Z}_{p_1f_1(m)\times p_2f_2(n)}\right|}=1-\frac{1}{p_1p_2}.
\end{equation*}

This implies
\begin{equation*}
h^{\bf f}\left(X^{\bf p}_{\Omega} \right)
=\frac{\left(1-p_1p_2\tau\right)\left(1-\frac{1}{p_1p_2}\right)}{1-\tau}\log |\Omega_1|+\frac{\left(p_1p_2\tau-\frac{1}{p_1p_2}\right)}{1-\tau}\log |\Omega_1|+\frac{\left(\frac{1}{p_1p_2}-\tau\right)}{1-\tau}\log |\Omega_2|.	
\end{equation*}
\item[\bf3.] If $\ell\geq 3$ and $\frac{1}{\left(p_1p_2\right)^{\ell-1}}\geq \tau >\frac{1}{\left(p_1p_2\right)^{\ell}}$, then we have 
\begin{equation*}
	\begin{aligned}
		&\log \left|\mathcal{P}\left(F_{\bf m}({\bf f}),X^{\bf p}_{\Omega}\right)\right|\\
		&=\sum_{i=1}^{\ell-2}\log |\Omega_i|^{\left| \mathbb{Z}_{\left\lfloor m/p_1^{i-1}\right\rfloor\times \left\lfloor n/p_2^{i-1}\right\rfloor}\setminus \mathbb{Z}_{\left\lfloor m/p_1^i\right\rfloor\times \left\lfloor n/p_2^i\right\rfloor}\right|\frac{\left| \mathcal{K}_{\left\lfloor m/p_1^{i-1}\right\rfloor\times \left\lfloor n/p_2^{i-1}\right\rfloor;i}\setminus \mathbb{Z}_{\left\lfloor m/p_1^i\right\rfloor\times \left\lfloor n/p_2^i\right\rfloor}\right|}{\left| \mathbb{Z}_{\left\lfloor m/p_1^{i-1}\right\rfloor\times \left\lfloor n/p_2^{i-1}\right\rfloor}\setminus \mathbb{Z}_{\left\lfloor m/p_1^i\right\rfloor\times \left\lfloor n/p_2^i\right\rfloor}\right|}}\\
		&+\log |\Omega_{\ell-1}|^{\left|\mathbb{Z}_{\left\lfloor m/p_1^{\ell-2}\right\rfloor\times \left\lfloor n/p_2^{\ell-2}\right\rfloor}\setminus\mathbb{Z}_{p_1f_1(m)\times p_2f_2(n)}\right|\frac{\left|\mathcal{K}_{\left\lfloor m/p_1^{\ell-2}\right\rfloor\times \left\lfloor n/p_2^{\ell-2}\right\rfloor;\ell-1}\setminus\mathbb{Z}_{p_1f_1(m)\times p_2f_2(n)}\right|}{\left|\mathbb{Z}_{\left\lfloor m/p_1^{\ell-2}\right\rfloor\times \left\lfloor n/p_2^{\ell-2}\right\rfloor}\setminus\mathbb{Z}_{p_1f_1(m)\times p_2f_2(n)}\right|}}\\
		&+\log |\Omega_{\ell-1}|^{\left|\mathbb{Z}_{p_1f_1(m)\times p_2f_2(n)}\setminus\mathbb{Z}_{\left\lfloor m/p_1^{\ell-1}\right\rfloor\times \left\lfloor n/p_2^{\ell-1}\right\rfloor}\right|}+\log |\Omega_{\ell}|^{\left| \mathbb{Z}_{\left\lfloor m/p_1^{\ell-1}\right\rfloor\times \left\lfloor n/p_2^{\ell-1}\right\rfloor}\setminus \mathbb{Z}_{f_1(m)\times f_2(n)}\right|}.
	\end{aligned}
\end{equation*}	

Since for $1\leq i \leq \ell-2$, $\left\lfloor\frac{m}{p_1^{i-1}}\right\rfloor-\left\lfloor\frac{m}{p_1^i}\right\rfloor,\left\lfloor\frac{n}{p_2^{i-1}}\right\rfloor-\left\lfloor\frac{n}{p_2^i}\right\rfloor,\left\lfloor\frac{m}{p_1^{\ell-2}}\right\rfloor-p_1f_1(m)$ and $\left\lfloor\frac{n}{p_2^{\ell-2}}\right\rfloor-p_2f_2(n)\to\infty$ as $m,n\to \infty$. Then by the same reason above, we have
\begin{equation*}
\lim_{m,n\to\infty}\frac{\left| \mathcal{K}_{\left\lfloor m/p_1^{i-1}\right\rfloor\times \left\lfloor n/p_2^{i-1}\right\rfloor;i}\setminus \mathbb{Z}_{\left\lfloor m/p_1^i\right\rfloor\times \left\lfloor n/p_2^i\right\rfloor}\right|}{\left| \mathbb{Z}_{\left\lfloor m/p_1^{i-1}\right\rfloor\times \left\lfloor n/p_2^{i-1}\right\rfloor}\setminus \mathbb{Z}_{\left\lfloor m/p_1^i\right\rfloor\times \left\lfloor n/p_2^i\right\rfloor}\right|}=1-\frac{1}{p_1p_2},
\end{equation*}
and
\begin{equation*}
\lim_{m,n\to\infty}\frac{\left|\mathcal{K}_{\left\lfloor m/p_1^{\ell-2}\right\rfloor\times \left\lfloor n/p_2^{\ell-2}\right\rfloor;\ell-1}\setminus\mathbb{Z}_{p_1f_1(m)\times p_2f_2(n)}\right|}{\left|\mathbb{Z}_{\left\lfloor m/p_1^{\ell-2}\right\rfloor\times \left\lfloor n/p_2^{\ell-2}\right\rfloor}\setminus\mathbb{Z}_{p_1f_1(m)\times p_2f_2(n)}\right|}=1-\frac{1}{p_1p_2}.
\end{equation*}

This implies
\begin{equation*}
	\begin{aligned}
		h^{\bf f}\left(X^{\bf p}_{\Omega} \right)
		&=\sum_{i=1}^{\ell-2}\frac{\left(\frac{1}{\left(p_1p_2\right)^{i-1}}-\frac{1}{\left(p_1p_2\right)^i}\right)\left(1-\frac{1}{p_1p_2}\right)}{1-\tau}\log |\Omega_i|\\
		&+\frac{\left(\frac{1}{\left(p_1p_2\right)^{\ell-2}}-p_1p_2\tau\right)\left(1-\frac{1}{p_1p_2}\right)}{1-\tau}\log |\Omega_{\ell-1}|\\
		&+\frac{\left(p_1p_2\tau-\frac{1}{\left(p_1p_2\right)^{\ell-1}}\right)}{1-\tau}\log |\Omega_{\ell-1}|+\frac{\left(\frac{1}{\left(p_1p_2\right)^{\ell-1}}-\tau\right)}{1-\tau}\log |\Omega_{\ell}|.
	\end{aligned}	
\end{equation*}
The proof is now complete.
\end{proof}

Lemma \ref{lemma d>=2} has a similar result to Lemma \ref{lemma hbc2d} for multiplicative integer system on $\mathbb{N}^d,d\geq 2$.

\begin{lemma}\label{lemma d>=2}
	For $\ell \geq 1$, if $\frac{n_i}{p_i^\ell}<f_i(n_i)\leq \frac{n_i}{p_i^{\ell-1}}$ for all $1\leq i \leq d$, we have
	\begin{equation*}
			h^{\bf f}\left(X^{\bf p}_{\Omega} \right)=\left(\frac{1}{1-\tau}\right)\left[\left(1-\frac{1}{p_1\cdots p_d}\right)\zeta_\ell+\eta_\ell\right],	
	\end{equation*}
	where $\tau=\lim_{n_1,...,n_d\to \infty}\frac{f_1(n_1)\cdots f_d(n_d)}{n_1\cdots n_d}$,
	 \[\zeta_\ell=\sum_{i=1}^{\ell-2}\frac{\left(p_1\cdots p_d-1\right)\log |\Omega_i|}{\left(p_1\cdots p_d\right)^i}+\left(\frac{1}{\left(p_1\cdots p_d\right)^{\ell-2}}-p_1\cdots p_d\tau\right)\log |\Omega_{\ell-1}|\] and 
	 \[\eta_\ell=\left(p_1\cdots p_d\tau-\frac{1}{\left(p_1\cdots p_d\right)^{\ell-1}}\right)\log |\Omega_{\ell-1}|+\left(\frac{1}{\left(p_1\cdots p_d\right)^{\ell-1}}-\tau\right)\log |\Omega_{\ell}|.\] 
\end{lemma}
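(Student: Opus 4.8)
The plan is to run the same stratification as in Lemma~\ref{lemma hbc2d}, the only change being that every count is now governed by the product $p_1\cdots p_d$, for which Lemma~\ref{lemma decompose2} already supplies all of the densities. Throughout write ${\bf f}=(f_1(n_1),\dots,f_d(n_d))$, let $F_{\bf n}({\bf f})=\mathbb{Z}_{n_1\times\cdots\times n_d}\setminus\mathbb{Z}_{f_1(n_1)\times\cdots\times f_d(n_d)}$ be the boundary block, and for a vector ${\bf a}$ abbreviate $\lfloor{\bf a}\rfloor=(\lfloor a_1\rfloor,\dots,\lfloor a_d\rfloor)$. The engine is the following bookkeeping of $\mathcal{P}(F_{\bf n}({\bf f}),X^{\bf p}_{\Omega})$: by Lemma~\ref{lemma:2.2} the lattice splits into the disjoint orbits $\mathcal{M}_{\bf p}({\bf i})$, ${\bf i}\in\mathcal{I}_{\bf p}$, so the projection factorizes over orbits, and because $\Omega$ is shift-invariant an orbit whose intersection with the boundary block is $s$ consecutive lattice points contributes exactly $|\Omega_s|$ patterns (as for $d\leq 2$, the visible block is a length-$s$ window into an $\Omega$-sequence and every word of $\Omega_s$ appears in such a window). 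Hence $\log|\mathcal{P}(F_{\bf n}({\bf f}),X^{\bf p}_{\Omega})|=\sum_{\bf v}\log|\Omega_{s({\bf v})}|$, where ${\bf v}$ runs over the \emph{first visible points} of the orbits meeting $F_{\bf n}({\bf f})$ and $s({\bf v})$ is the number of window-orbit points from ${\bf v}$ onward.

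First I would unwind the definition~(\ref{1}), writing $|F_{\bf n}({\bf f})|=n_1\cdots n_d-f_1(n_1)\cdots f_d(n_d)$ so that $|F_{\bf n}({\bf f})|/(n_1\cdots n_d)\to 1-\tau$, which supplies the prefactor $(1-\tau)^{-1}$; the scale hypothesis $\frac{n_i}{p_i^\ell}<f_i(n_i)\leq\frac{n_i}{p_i^{\ell-1}}$ forces $\tau\in\big(\frac{1}{(p_1\cdots p_d)^\ell},\frac{1}{(p_1\cdots p_d)^{\ell-1}}\big]$ and fixes the regime. Next I would classify the first visible points against the threshold box $\mathbb{Z}_{p_1f_1(n_1)\times\cdots\times p_df_d(n_d)}$: a point ${\bf v}\in F_{\bf n}({\bf f})$ is first-visible iff either ${\bf v}\in\mathcal{I}_{\bf p}$ or its predecessor ${\bf v}/{\bf p}$ lies in the interior $\mathbb{Z}_{f_1(n_1)\times\cdots\times f_d(n_d)}$. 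On the deep part ${\bf v}\in\mathbb{Z}_{p_1f_1\times\cdots\times p_df_d}\setminus\mathbb{Z}_{f_1\times\cdots\times f_d}$ every point is first-visible, so this region is counted by its full volume via Lemma~\ref{lemma decompose2}(1); on the outer part ${\bf v}\notin\mathbb{Z}_{p_1f_1\times\cdots\times p_df_d}$ a non-start has its predecessor outside the interior, so first-visibility becomes equivalent to ${\bf v}\in\mathcal{I}_{\bf p}$ and the count acquires the density factor $1-\frac{1}{p_1\cdots p_d}$ from Lemma~\ref{lemma decompose2}(2).

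Finally I would read off $s({\bf v})$ from the nested shells $\mathbb{Z}_{\lfloor{\bf n}/{\bf p}^{c-1}\rfloor}\setminus\mathbb{Z}_{\lfloor{\bf n}/{\bf p}^{c}\rfloor}$: since a first-visible ${\bf v}$ drags its entire forward orbit into $F_{\bf n}({\bf f})$, its visible length equals its window cardinality, and the box inequalities defining these shells pin that cardinality to the index $c$ exactly, which is Lemma~\ref{lemma decompose2}(1). The deep part splits at $\mathbb{Z}_{\lfloor{\bf n}/{\bf p}^{\ell-1}\rfloor}$ into the length-$\ell$ and length-$(\ell-1)$ pieces, producing the two density-free terms of $\eta_\ell$; the outer part splits into the shells $c=1,\dots,\ell-2$ together with the piece $\mathbb{Z}_{\lfloor{\bf n}/{\bf p}^{\ell-2}\rfloor}\setminus\mathbb{Z}_{p_1f_1\times\cdots\times p_df_d}$ of visible length $\ell-1$, and after the density factor these assemble into $(1-\frac{1}{p_1\cdots p_d})\zeta_\ell$. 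Dividing by $|F_{\bf n}({\bf f})|$ and letting $n_1,\dots,n_d\to\infty$ — legitimate since each shell width and the gap $\lfloor n_i/p_i^{\ell-2}\rfloor-p_if_i(n_i)$ diverge — yields the claimed formula. The one genuinely $d$-dependent point to check is that the box-defined shells still track orbit cardinalities exactly in $d$ variables rather than through a minimum over coordinates; but this is precisely the content of Lemma~\ref{lemma decompose2}(1), so the whole computation collapses to the $d=2$ case under $p_1p_2\mapsto p_1\cdots p_d$, and I would therefore treat the three regimes $\ell=1$, $\ell=2$, $\ell\geq 3$ exactly as there.
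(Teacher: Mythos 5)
Your proposal is correct and coincides with the paper's intended argument: the paper omits the proof of Lemma \ref{lemma d>=2}, stating only that it is almost identical to that of Lemma \ref{lemma hbc2d}, and your write-up is precisely that generalization --- the same disjoint-orbit factorization via Lemma \ref{lemma:2.2}, the same classification of first visible points against the threshold box $\mathbb{Z}_{p_1f_1(n_1)\times\cdots\times p_df_d(n_d)}$, the same nested shells pinning the window cardinalities, and the density $1-\frac{1}{p_1\cdots p_d}$ from Lemma \ref{lemma decompose2}, with $p_1p_2$ replaced by $p_1\cdots p_d$ throughout. Your handling of the three regimes $\ell=1$, $\ell=2$, $\ell\geq 3$, including the divergence of the shell widths and of the gaps $\lfloor n_i/p_i^{\ell-2}\rfloor-p_if_i(n_i)$ needed to apply the density limits, matches the paper's $d=2$ proof case for case.
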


\begin{proof}
	The proof is almost identical to the proof of Lemma \ref{lemma hbc2d}, so we omit it.
\end{proof}

Using Lemma \ref{lemma d>=2} with the same argument as Theorem \ref{thm real 1d}, we obtain the dense property for 2-MIS on $\mathbb{N}^d,d\geq 2$. 

\begin{theorem}\label{thm real nd}
	For any $h\in \left[ h\left(X^{\bf p}_{\Omega} \right),\log |\Omega_1|\right]$, there is a $\tau$ such that $h$ can be realized by boundary complexity $h^{\bf f}$. That is, $h^{\bf f}\left(X^{\bf p}_{\Omega} \right)=h$.
\end{theorem}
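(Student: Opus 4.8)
The plan is to run the proof of Theorem \ref{thm real 1d} essentially verbatim, exploiting the fact that Lemma \ref{lemma d>=2} is nothing but the one-dimensional Lemma \ref{lemma hbc1d} after the single substitution $p \mapsto P := p_1\cdots p_d$. Indeed, under the hypothesis $\frac{n_i}{p_i^\ell} < f_i(n_i) \le \frac{n_i}{p_i^{\ell-1}}$ for every $i$, the joint density $\tau = \lim \frac{f_1(n_1)\cdots f_d(n_d)}{n_1\cdots n_d}$ lies in $(P^{-\ell},P^{-(\ell-1)}]$, and the closed form for $h^{\bf f}(X^{\bf p}_\Omega)$ in Lemma \ref{lemma d>=2} depends on the data $(p_1,\dots,p_d)$ only through $P$. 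Consequently every formula appearing in the proof of Theorem \ref{thm real 1d} transfers once $p$ is replaced by $P$, and it remains only to record the construction together with the two verifications on which it rests.

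First I would dispose of the left endpoint: for $h=h(X^{\bf p}_\Omega)$ take $f_i\equiv 1$ for all $i$, so that $\tau=0$ and $h^{\bf f}(X^{\bf p}_\Omega)=h(X^{\bf p}_\Omega)$, exactly as in the base case of Theorem \ref{thm real 1d}. For the remaining values I would cover the half-open interval $\big(h(X^{\bf p}_\Omega),\log|\Omega_1|\big]$ by the slabs $(A_{k+1},A_k]$, $k\ge 2$, where
\[
A_k = \frac{1}{1-P^{-(k-1)}}\left\{\Big(1-\tfrac{1}{P}\Big)(P-1)\sum_{i=1}^{k-2}\frac{\log|\Omega_i|}{P^i} + \Big(\tfrac{1}{P^{k-2}}-\tfrac{1}{P^{k-1}}\Big)\log|\Omega_{k-1}|\right\}
\]
is the $d$-dimensional analogue of the author's threshold. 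Given $h\in(A_{k+1},A_k]$, I would take $\tau$ to be the explicit expression of Theorem \ref{thm real 1d} with $p$ replaced by $P$, namely
\[
\tau = \frac{\big(1-\tfrac{1}{P}\big)(P-1)\sum_{i=1}^{k-2}\frac{\log|\Omega_i|}{P^i} + \big(\tfrac{1}{P^{k-2}}-\tfrac{2}{P^{k-1}}\big)\log|\Omega_{k-1}| + \tfrac{1}{P^{k-1}}\log|\Omega_k| - h}{\log|\Omega_k|-\log|\Omega_{k-1}|-h},
\]
and then conclude $h^{\bf f}(X^{\bf p}_\Omega)=h$ by applying Lemma \ref{lemma d>=2} with $\ell=k$. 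Here the substitution back into Lemma \ref{lemma d>=2} returns $h$ by construction, since the displayed $\tau$ is precisely the solution of the equation $h^{\bf f}=h$, which is affine in $\tau$.

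Two verifications then close the argument. The range check $P^{-k} < \tau \le P^{-(k-1)}$ follows once one observes that the displayed $\tau$ is a monotone fractional-linear function of $h$ and that its endpoints are pinned: a direct substitution gives $A_k = h^{\bf f}\big|_{\tau=P^{-(k-1)}}$ and $A_{k+1}=h^{\bf f}\big|_{\tau=P^{-k}}$, so $h\in(A_{k+1},A_k]$ is forced to map into $(P^{-k},P^{-(k-1)}]$. The coverage check is that $A_2=\log|\Omega_1|$ and $A_k\downarrow h(X^{\bf p}_\Omega)$, whence $\bigcup_{k\ge 2}(A_{k+1},A_k]=\big(h(X^{\bf p}_\Omega),\log|\Omega_1|\big]$ and, together with the base case, the whole interval $\big[h(X^{\bf p}_\Omega),\log|\Omega_1|\big]$ is realized.

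I expect the one genuinely nonmechanical step to be the convergence $A_k\downarrow h(X^{\bf p}_\Omega)$. Passing to the limit, the prefactor $(1-P^{-(k-1)})^{-1}\to 1$ and the boundary term $(P^{-(k-2)}-P^{-(k-1)})\log|\Omega_{k-1}|\to 0$, since $\log|\Omega_{k-1}|$ grows at most linearly in $k$ while its coefficient decays geometrically; this leaves $\lim_k A_k = (P-1)^2\sum_{i\ge 1}\frac{\log|\Omega_i|}{P^{i+1}}$. One then recognizes this series as the entropy $h(X^{\bf p}_\Omega)$ via the limiting densities $\lim \frac{|\mathcal{K}_{N_1\times\cdots\times N_d;\ell}|}{N_1\cdots N_d} = \frac{(P-1)^2}{P^{\ell+1}}$ of Lemma \ref{lemma decompose2}(3) combined with the interchange of limit and sum in Lemma \ref{lemma decompose2}(4). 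Everything else in the proof is the transcription $p\mapsto P$ of the $d=1$ computation.
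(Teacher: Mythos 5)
Your proposal is correct and is essentially the paper's own proof: the authors likewise dispose of the left endpoint with $f_i\equiv 1$, cover $\bigl(h(X^{\bf p}_{\Omega}),\log|\Omega_1|\bigr]$ by the slabs $(A_{k+1},A_k]$, solve the affine equation for $\tau$, and invoke Lemma \ref{lemma d>=2} with $\ell=k$, the whole argument being the $d=1$ proof of Theorem \ref{thm real 1d} transcribed under $p\mapsto p_1\cdots p_d$. Your added verifications (endpoint pinning of $\tau$, $A_2=\log|\Omega_1|$, and $A_k\downarrow h(X^{\bf p}_{\Omega})$ via Lemma \ref{lemma decompose2}(3)--(4)) are exactly the details the paper leaves implicit when it says the argument is the same.
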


\subsection{The boundary complexity for $\mathbb{N}^d$ SFTs}
Given $(i_{1},i_{2},\ldots, i_{d})\in \mathbb{N}^{d}$, for $n_{j}\geq 1$ (or $n_{j}=\infty$), $1\leq j\leq d$, define the $n_{1}\times n_{2}\times\cdots \times n_{d}$ rectangular lattice with initial point $(i_{1},i_{2},\ldots, i_{d})$ by
\begin{equation*}
	\mathbb{N}_{n_{1}\times n_{2}\times\cdots \times n_{d}}(i_{1},i_{2},\ldots, i_{d})=[i_{1},i_{1}+n_{1}-1]\times[i_{2},i_{2}+n_{2}-1]\times\cdots \times [i_{d},i_{d}+n_{d}-1],
\end{equation*}
where $[m_{1},m_{2}]$ means the set of integers between $m_{1}$ and $m_{2}$. In particular, let $\mathbb{N}_{n_{1}\times n_{2}\times\cdots \times n_{d}}=\mathbb{N}_{n_{1}\times n_{2}\times\cdots \times n_{d}}(1,1,\ldots,1)$.
Suppose $X\subseteq \mathcal{A}^{\mathbb{N}^{d}}$ is an $\mathbb{N}^{d}$ SFTs with finite forbidden set $\mathcal{F}$.  For $1\leq k\leq d$ and $i\geq 1$, the strip shifts $\Sigma^{(k)}_{i}(X)$  with thickness $i$ in $k$th-direction is defined by
\begin{equation*}
	\Sigma^{(k)}_{i}(X)=\left\{u\in \mathcal{A}^{\mathbb{N}_{n_{1}\times n_{2}\times\cdots \times n_{d}}} :  u \text{ contains no forbidden patterns in }\mathcal{F}\right\},\footnote{For $d=2$, we simply write $\Sigma_i^{(1)}(X)=V_i(X)$ and $\Sigma_i^{(2)}(X)=H_i(X)$, which are mentioned in the introduction.}
\end{equation*}
where  $n_{k}=i$ and $n_{j}=\infty$ for $j \neq k$. It is well-known that $\Sigma^{(k)}_{i}(X)$ can be regarded as a $(d-1)$-dimensional SFT (with thickness $i$). Then, we define the entropy $h^{(k)}_{i}(X)$ of $\Sigma^{(k)}_{i}(X)$ by
\begin{equation*}
	h^{(k)}_{i}(X)=\underset{\underset{j\neq k }{n_{j}\rightarrow \infty}}{\lim} \frac{\log \mathcal{P}(\mathbb{N}_{n_{1}\times\cdots \times n_{k-1} \times i \times n_{k+1}\times\cdots \times n_{d}},X) }{n_{1}\cdot\ldots\cdot n_{k-1}\cdot n_{k+1}\ldots \cdot n_{d}},
\end{equation*}
where for $F\subseteq \mathbb{N}^{d}$, $\mathcal{P}(F,X)$ is the cardinality of $\left\{x|_{F}:x\in X\right\}$. Clearly, 
	\begin{align*}
		h_{top}(X)&\equiv  \lim_{n_j\to\infty,1\leq j\leq d}\frac{\log \mathcal{P}(\mathbb{N}_{n_{1}\times n_{2}\times\cdots \times n_{d}},X) }{n_{1}\cdot n_{2}\cdot\ldots \cdot n_{d}} \\
		&= \lim_{i\to \infty} \frac{h^{(k)}_{i}(X)}{i},
	\end{align*}
see Pavlov \cite{pavlov2012approximating}.

For a $\mathbb{N}^{d}$ SFT $X\subseteq \mathcal{A}^{\mathbb{N}^{d}}$, let 
\begin{equation*}
	\left\{x|_{\mathbb{N}_{n_{1}\times n_{2}\times\cdots \times n_{d}(i_{1},\ldots,i_{d})}}:x\in X, n_{j}\geq 1, 1\leq j\leq d \text{ and }(i_{1},\ldots,i_{d})\in\mathbb{N}^{d} \right\}
\end{equation*}
be the set of block patterns. We recall that $X$ satisfies \emph{block gluing} with constant $N$ if for any two block patterns $u$ and $v$ on rectangular lattices $s(u)$ and $s(v)$ with $d(s(u),s(v))\geq N$, there exists $x\in X$ such that $x|_{s(u)}=u$ and $x|_{s(v)}=v$.
\begin{theorem}\label{thm Nd sft bd}
		If the $\mathbb{N}^d$ SFT $X$ satisfies block gluing with constant $N$, then for any $h\in \left[\frac{1}{i}\min_{ 1\leq k\leq d} h_i^{(k)}(X), \frac{1}{i}\max_{1\leq k \leq d} h_i^{(k)} (X)\right]$, there exist $0\leq i_1,...,i_d\leq 1$ such that $h$ can be realized by boundary complexity $h^{\bf f}$ where $f_k(n)=i_kn-i$ for all $1\leq k \leq d$. That is, $h^{\bf f}\left(X\right)=h=\sum_{k=1}^d\frac{1}{i}\frac{\prod_{\ell\neq k}i_{\ell}}{\sum_{k=1}^d\prod_{\ell\neq k}i_{\ell}} h_i^{(k)}$ with $\sum_{k=1}^d i_k=1$.
\end{theorem}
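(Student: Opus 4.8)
The plan is to read off the boundary complexity from a slab decomposition of the frame $F_{\bf m}(\mathbf f)=\mathbb{N}_{\bf m}\setminus\mathbb{N}_{\mathbf f_{\mathbf m}}$ and then to use block gluing to factor the pattern count across the slabs. First I would write the standard telescoping of a box minus a corner-anchored subbox,
\begin{equation*}
F_{\bf m}(\mathbf f)=\bigsqcup_{k=1}^{d}R_{k},\qquad R_{k}=\{\mathbf x\in\mathbb{N}_{\bf m}:x_{j}\leq f_{j}(m_{j})\text{ for }j<k,\ x_{k}>f_{k}(m_{k})\},
\end{equation*}
which is a disjoint union over the least coordinate in which $\mathbf x$ leaves the inner box. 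Each $R_{k}$ is a rectangular lattice that is thin (of the prescribed thickness $i$) in the $k$th direction and long in the remaining $d-1$ directions, with cross-sectional size $\prod_{j<k}f_{j}(m_{j})\prod_{j>k}m_{j}$. The point of the choice $f_{k}(n)=i_{k}n-i$, together with letting $\mathbf m\to\infty$ along the proportional ray $m_{k}\sim i_{k}M$, is precisely that $R_{k}$ becomes, up to a bounded shift, a finite piece of the strip shift $\Sigma_{i}^{(k)}(X)$ whose cross-section is asymptotic to $M^{d-1}\prod_{\ell\neq k}i_{\ell}$.

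Next I would estimate each slab count. Since $\Sigma_{i}^{(k)}(X)$ is a $(d-1)$-dimensional SFT of thickness $i$ with entropy $h_{i}^{(k)}(X)$, the definition of $h_{i}^{(k)}$ gives
\begin{equation*}
\log\left|\mathcal{P}(R_{k},X)\right|=h_{i}^{(k)}(X)\Big(\prod_{\ell\neq k}\text{(cross-sectional length)}\Big)+o\Big(\prod_{\ell\neq k}\text{(cross-sectional length)}\Big)
\end{equation*}
as $\mathbf m\to\infty$. To assemble the whole frame I invoke block gluing with constant $N$: after trimming collars of width $N$ around the pairwise intersections $R_{k}\cap R_{k'}$, which are lower-dimensional and hence of lower-order volume, any choice of admissible patterns on the trimmed slabs extends to a single point of $X$, so $\left|\mathcal{P}(F_{\bf m}(\mathbf f),X)\right|$ factors as $\prod_{k}\left|\mathcal{P}(R_{k},X)\right|$ up to a factor subexponential in $|F_{\bf m}(\mathbf f)|$. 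Summing the slab entropies and dividing by $|F_{\bf m}(\mathbf f)|=\sum_{k}|R_{k}|$ yields
\begin{equation*}
h^{\mathbf f}(X)=\frac{\sum_{k=1}^{d}h_{i}^{(k)}(X)\prod_{\ell\neq k}i_{\ell}}{\,i\sum_{k=1}^{d}\prod_{\ell\neq k}i_{\ell}\,}=\sum_{k=1}^{d}\frac{1}{i}\,\frac{\prod_{\ell\neq k}i_{\ell}}{\sum_{j}\prod_{\ell\neq j}i_{\ell}}\,h_{i}^{(k)}(X),
\end{equation*}
which is the asserted formula.

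For the realization statement I would run an intermediate-value argument over the simplex $\{(i_{1},\dots,i_{d}):i_{k}\geq0,\ \sum_{k}i_{k}=1\}$. The weights $w_{k}=\prod_{\ell\neq k}i_{\ell}/\sum_{j}\prod_{\ell\neq j}i_{\ell}$ depend continuously on $(i_{k})$ and form a probability vector; as $(i_{k})$ moves toward the vertex in the direction achieving $\max_{k}h_{i}^{(k)}(X)$ (respectively $\min_{k}h_{i}^{(k)}(X)$) the average $\frac{1}{i}\sum_{k}w_{k}h_{i}^{(k)}(X)$ tends to $\frac{1}{i}\max_{k}h_{i}^{(k)}(X)$ (respectively the minimum). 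By continuity it therefore attains every value in $[\frac{1}{i}\min_{k}h_{i}^{(k)}(X),\frac{1}{i}\max_{k}h_{i}^{(k)}(X)]$, so each target $h$ in this interval is realized by a suitable $(i_{k})$ and the corresponding $\mathbf f$.

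The main obstacle is the block-gluing factorization: one must verify that gluing the $d$ slabs, which meet along codimension-one edges and lower-dimensional corners, costs only a subexponential correction, so that the $N$-collar trimming, the floor-function discrepancies, and the affine shift $-i$ in $f_{k}(n)=i_{k}n-i$ are all absorbed into the $o(\cdot)$ error terms. Controlling these overlap regions uniformly as $\mathbf m\to\infty$ along the proportional ray, and confirming that the resulting density limit is insensitive to the trimming, is the technical heart of the argument; the identification of each slab with $\Sigma_{i}^{(k)}(X)$ and the simplex intermediate-value step are then routine.
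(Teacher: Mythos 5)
Your argument follows essentially the same route as the paper's proof: the paper likewise decomposes the constant-width frame into $d$ slabs of thickness $i$ (its $L_1(n),\dots,L_d(n)$, an overlapping cover rather than your disjoint telescoping --- a cosmetic difference), bounds $\left|\mathcal{P}(F_{\bf m}({\bf f}),X)\right|$ above by the product of the slab counts and below by the product over slabs trimmed by the gluing constant $N$ (your collar step), and passes to the limit to obtain the weighted average $\sum_k \frac{1}{i}w_k h_i^{(k)}$ with $w_k=\prod_{\ell\neq k}i_\ell\big/\sum_j\prod_{\ell\neq j}i_\ell$. Note that, like the paper's proof, your slab picture tacitly reads the frame as $\prod_k[1,i_kn]\setminus\prod_k[1,i_kn-i]$ (constant width $i$), which is the intended meaning of $f_k(n)=i_kn-i$ even though it does not match a literal application of the definition of $F_{\bf m}({\bf f})$ with all $m_k\to\infty$ independently; you and the paper make the same interpretive choice, so this is not a defect of your write-up relative to the paper's.

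There is one concrete slip, in your realization step (a step the paper actually omits, since it only derives the formula and, in the $d=2$ corollary, solves for $t$ explicitly): your intermediate-value heuristic has the direction reversed. Since $w_{k^*}\propto\prod_{\ell\neq k^*}i_\ell$, the weight $w_{k^*}$ \emph{vanishes} as $(i_1,\dots,i_d)$ approaches the vertex $e_{k^*}$ of the simplex: for $d=2$ one has $w_1=i_2\to0$ as $i_1\to1$, and for $d\geq3$, writing $i_\ell=\epsilon$ for $\ell\neq k^*$, one gets $w_{k^*}\sim\epsilon^{d-1}$ against $\epsilon^{d-2}$ for the other weights. So moving toward the vertex achieving $\max_k h_i^{(k)}$ drives the average toward the \emph{other} strip entropies, not toward the maximum. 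To concentrate the weight on $h_i^{(k^*)}$ you should instead send $i_{k^*}\to0$ while keeping the remaining coordinates bounded away from zero (geometrically, the slab thin in direction $k^*$ is then the one whose cross-section avoids the small factor $i_{k^*}$ and hence dominates the frame's volume). With this correction your continuity-and-connectedness argument goes through and yields every $h\in\left[\frac{1}{i}\min_k h_i^{(k)},\frac{1}{i}\max_k h_i^{(k)}\right]$, so apart from this reversed limit your proposal is sound and even supplies a detail the paper leaves implicit.
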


\begin{proof}
	For simplicity, we prove the case $d=3$, the case $d=2$ being a corollary and the cases $d>3$ being similar. Let $0\leq i_1,...,i_3 \leq 1$ with $i_1+i_2+i_3=1$, and the block gluing constant denoted by $N$. For any $n\in\mathbb{N}$, we first write the $i$-boundary $L(n)=[1,i_1n]\times [1,i_2 n]\times [1,i_3 n]\setminus [1,i_1n-i]\times [1,i_2 n-i]\times [1,i_3 n-i]$ as a union of the following 3 parts: 
	\begin{align*}
	&L_1(n): [i_1n-i+1,i_1n]\times [1,i_2 n]\times [1,i_3 n],\\
	&L_2(n): [1,i_1n]\times [i_2n-i+1,i_2 n]\times [1,i_3 n],\\
	&L_3(n):[1,i_1n]\times [1,i_2 n]\times [i_3n-i+1,i_3 n].
	\end{align*}

	Due to the block gluing constant $N$, we cut the $L_1(n)$, $L_2(n)$ and $L_3(n)$ into the following 3 rectangulars:
	\begin{align*}
	&L_1'(n):[i_1n-i+1,i_1n]\times [1,i_2 n-i-N]\times [1,i_3 n-i-N],\\
	&L_2'(n):[1,i_1n-i-N]\times [i_2n-i+1,i_2 n]\times [1,i_3 n-i-N],\\
	&L_3'(n):[1,i_1n-i-N]\times [1,i_2 n-i-N]\times [i_3n-i+1,i_3 n].
	\end{align*}
	
	Then we can glue all of the admissible local patterns in $L_1'(n)$, $L_2'(n)$ and $L_3'(n)$ by the following steps: Firstly, we glue the patterns in $L_1'(n)$ and $L_2'(n)$ to form an admissible local pattern in $[1,i_1n]\times[1,i_2 n]\times [1,i_3n-i-N]$; then we glue this admissible pattern and the pattern in $L_3'(n)$ to form an admissible pattern in $[1,i_1n]\times [1,i_2 n]\times [1,i_3 n]$.
	
	Thus, we have the following inequality,
		\begin{align*}
			&\left|\mathcal{P}\left(L_1'(n),X\right)\right|\left|\mathcal{P}\left(L_2'(n),X\right)\right|\left|\mathcal{P}\left(L_3'(n),X\right)\right|\\
			&\leq \left|\mathcal{P}\left(F_{\bf m}({\bf f}),X\right)\right|\\
			&\leq \left|\mathcal{P}\left(L_1(n),X\right)\right|\left|\mathcal{P}\left(L_2(n),X\right)\right|\left|\mathcal{P}\left(L_3(n),X\right)\right|,
		\end{align*}
	where $f_k(n)=i_kn-i$ for all $1\leq k \leq d$.
	
	This implies 
	\begin{align*}
		&\sum_{j=1}^3 \frac{1}{i}\frac{\log\left|\mathcal{P}\left(L_j'(n),X\right)\right|}{|L_j'(n)|/i}\frac{|L_j'(n)|}{|L(n)|}\\
		&=\frac{\log\left|\mathcal{P}\left(L_1'(n),X\right)\right|+\log\left|\mathcal{P}\left(L_2'(n),X\right)\right|\log\left|\mathcal{P}\left(L_3'(n),X\right)\right|}{|L(n)|}\\
	&\leq \frac{\log\left|\mathcal{P}\left(F_{\bf m}({\bf f}),X\right)\right|}{|L(n)|}\\
	&\leq \frac{\log\left|\mathcal{P}\left(L_1(n),X\right)\right|+\log\left|\mathcal{P}\left(L_2(n),X\right)\right|+\log\left|\mathcal{P}\left(L_3(n),X\right)\right|}{|L(n)|}\\
	&=\sum_{j=1}^3\frac{1}{i}\frac{\log\left|\mathcal{P}\left(L_j(n),X\right)\right|}{|L_j(n)|/i}\frac{|L_j(n)|}{|L(n)|}.		
	\end{align*}
	By taking $n\to \infty$, we have
	\[
	h^{\bf f}\left(X\right)=\frac{i_1i_2 h_i^{(3)}+i_2i_3 h_i^{(1)}+i_1i_3 h_i^{(2)}}{i\left(i_1i_2+i_2i_3+i_1i_3\right)}.
	\]
	The proof is thus completed.
\end{proof}	
	
Let $i\in \mathbb{N}$, and ${\bf H}_i(X),{\bf V}_i(X)$ be the associated transition matrices, namely, they are indexed by the vertical and horizontal patterns of length i and the transition rules from $H_i(X)$ and $V_i(X)$ respectively \cite{ban2005patterns,pavlov2012approximating}. Denote $\left[a;b\right]$ by $\left[a,b\right]$ interval if $a\leq b$ otherwise $\left[b,a\right]$.   	
	
\begin{corollary}\label{thm add constant hbc 2d}
	If the $\mathbb{N}^2$ SFT satisfies block gluing with constant $N$, then for any $h\in \left[ \frac{1}{i}\log\lambda_{{\bf H}_i(X)};\frac{1}{i}\log\lambda_{{\bf V}_i(X)}\right]$, there is a $0\leq t \leq 1$ such that $h$ can be realized by boundary complexity $h^{\bf f}$ where $f_1(n)=tn-i$ and $f_2(n)=(1-t)n-i$. That is, $h^{\bf f}\left(X \right)=h=\frac{1-t}{i}\log\lambda_{{\bf V}_i(X)}+\frac{t}{i}\log\lambda_{{\bf H}_i(X)}$.		
\end{corollary}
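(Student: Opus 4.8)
The plan is to derive the statement directly from Theorem \ref{thm Nd sft bd} specialized to $d=2$. Setting $i_1=t$ and $i_2=1-t$ with $t\in[0,1]$, so that $i_1+i_2=1$, the speed functions become $f_1(n)=tn-i$ and $f_2(n)=(1-t)n-i$, exactly the choice required in the corollary. Theorem \ref{thm Nd sft bd} then gives
\begin{equation*}
h^{\bf f}(X)=\sum_{k=1}^2\frac{1}{i}\frac{\prod_{\ell\neq k}i_\ell}{\sum_{k=1}^2\prod_{\ell\neq k}i_\ell}\,h_i^{(k)}(X).
\end{equation*}
For $d=2$ one has $\prod_{\ell\neq 1}i_\ell=i_2$ and $\prod_{\ell\neq 2}i_\ell=i_1$, while the normalizing denominator equals $i_1+i_2=1$, so the expression collapses to $h^{\bf f}(X)=\frac{1}{i}\bigl(i_2\,h_i^{(1)}(X)+i_1\,h_i^{(2)}(X)\bigr)$.

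The second step is to identify the two strip-shift entropies with the logarithms of the Perron eigenvalues of the transition matrices. By the footnote following the definition of $\Sigma_i^{(k)}(X)$ we have $\Sigma_i^{(1)}(X)=V_i(X)$ and $\Sigma_i^{(2)}(X)=H_i(X)$. Reading the width-$i$ strip slice by slice in the infinite direction presents it as a one-dimensional SFT whose admissible transitions are encoded by ${\bf V}_i(X)$ (resp.\ ${\bf H}_i(X)$); the standard Perron--Frobenius formula for the entropy of a one-dimensional SFT then yields $h_i^{(1)}(X)=\log\lambda_{{\bf V}_i(X)}$ and $h_i^{(2)}(X)=\log\lambda_{{\bf H}_i(X)}$. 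Substituting and writing $t=i_1$, $1-t=i_2$, I obtain
\begin{equation*}
h^{\bf f}(X)=\frac{1-t}{i}\log\lambda_{{\bf V}_i(X)}+\frac{t}{i}\log\lambda_{{\bf H}_i(X)},
\end{equation*}
which is the asserted value.

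Finally, to see that every $h$ in the stated interval is attained, I observe that the right-hand side is an affine, hence continuous, function of $t\in[0,1]$, taking the value $\frac{1}{i}\log\lambda_{{\bf V}_i(X)}$ at $t=0$ and $\frac{1}{i}\log\lambda_{{\bf H}_i(X)}$ at $t=1$. By the intermediate value theorem its image is exactly the closed interval $\left[\frac{1}{i}\log\lambda_{{\bf H}_i(X)};\frac{1}{i}\log\lambda_{{\bf V}_i(X)}\right]$, so for each such $h$ one simply solves the resulting linear equation for the corresponding $t\in[0,1]$. Since the only substantive input is Theorem \ref{thm Nd sft bd}, whose block-gluing hypothesis is inherited verbatim, I do not expect a genuine obstacle; the one point deserving care is the identification $h_i^{(k)}(X)=\log\lambda_{(\cdot)_i(X)}$, namely checking that the orientation conventions fixed by the footnote match the matrices ${\bf H}_i(X)$ and ${\bf V}_i(X)$ so that the two endpoints of the interval are labelled consistently with the statement.
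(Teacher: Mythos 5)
Your proof is correct and takes essentially the same route as the paper: specialize Theorem \ref{thm Nd sft bd} to $d=2$ with $i_1=t$, $i_2=1-t$, use the identifications $h_i^{(1)}(X)=\log\lambda_{{\bf V}_i(X)}$ and $h_i^{(2)}(X)=\log\lambda_{{\bf H}_i(X)}$, and solve the affine equation for $t$. The only cosmetic difference is that the paper writes the explicit solution $t=\bigl(h-\tfrac{1}{i}\log\lambda_{{\bf V}_i}\bigr)\big/\bigl(\tfrac{1}{i}\log\lambda_{{\bf H}_i}-\tfrac{1}{i}\log\lambda_{{\bf V}_i}\bigr)$ where you invoke the intermediate value theorem, and it asserts the strip-entropy identifications without the Perron--Frobenius justification you supply.
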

\begin{proof}
Since $h_i^{(1)}(X)=\log \lambda_{{\bf V}_i(X)}$, $h_i^{(2)}(X)=\log \lambda_{{\bf H}_i(X)}$
	and let 
	\begin{equation*}
		0\leq t=\frac{h-\frac{1}{i}\log\lambda_{{\bf V}_i}}{\frac{1}{i}\log\lambda_{{\bf H}_i}-\frac{1}{i}\log\lambda_{{\bf V}_i}} \leq 1.
	\end{equation*}
	The proof is complete by Theorem \ref{thm Nd sft bd}.
\end{proof}

\begin{remark}
For the equivalent condition of block gluing property see \cite{ban2021mixing}.
\end{remark}

\begin{example}For simplicity, we represent a $2\times 2$ pattern $\begin{array}{|c|c|}
	\hline
	a&b\\
	\hline
	c&d\\
	\hline
	\end{array}$ by $[a,b;c,d]$. 
	\item[1.] Let $\{0,1\}^{\mathbb{Z}_{2\times 2}}\setminus\mathcal{F}=\{[0,0;0,0],[1,0;0,0],[0,1;0,0],[0,0;1,0],[0,0;0,1]\}$, then the 1-boundary complexity is $\log g>h(X_{\mathcal{F}})$ by using the fact that $0$ is a safe symbol. 
	\item[2.] Let $\mathcal{F}=\{[0,1;0,1],[1,0;1,0],[0,1;1,1],[1,0;1,1],[1,1;0,1],[1,1;1,0],[1,1;1,1]\}$ be the forbidden set, it implies that the rule in the vertical (horizontal, resp.) direction is the same as the golden-mean shift (full shift, resp.). Then the 1-boundary complexites are of the form 
	\begin{equation*}
	\left(1-t\right)\log 2+t\log g, 0\leq t \leq 1,
	\end{equation*}
and any $h\in \left[h(X_{\mathcal{F}})=\log g, \log 2 \right]$ can be realized by $t=\frac{\log 2-h}{\log2-\log g}$.
	\item[3.] Let $\{0,1\}^{\mathbb{Z}_{2\times 2}}\setminus\mathcal{F}=\{[0,1;1,0],[1,0;0,1]\}$. This implies that the system contains only two global patterns, then the boundary complexites are $0=h(X_{\mathcal{F}})$.
	 	\item[4.] Let $\{0,1\}^{\mathbb{Z}_{2\times 2}}\setminus\mathcal{F}=\{[1,0;0,0],[0,1;0,0],[0,0;1,0],[0,0;0,1],[1,0;0,1]\}$. It can be easily checked that once the pattern on the top and rightmost strips of a rectangle is fixed, then the pattern on this rectangle with boundary are of these two strips is uniquely determined. Then the $i$-boundary complexites are
	 \begin{equation*}
	 	\lim_{m,n\to \infty}\frac{\log a_{m+n-1}}{mn-\left(m-i\right)\left(n-i\right)}=\frac{\log g}{i},
	 \end{equation*}
	 where $i\in\mathbb{N}$ and $a_{n+2}=a_{n+1}+a_{n}$ with $a_1=2$ and $a_2=3$.
	\item[5.] Let $\mathcal{F}=\{[1,0;0,0],[0,1;0,0],[0,0;1,0],[0,0;0,1],[0,1;1,1],[1,0;1,1],[1,1;0,1],[1,1;1,0]\}$ be the forbidden set. set. Then under the same argument as (4), we have the $i$-boundary complexites are 
\begin{equation*}
\lim_{m,n\to \infty}\frac{\log 2^{m+n-1}}{mn-\left(m-i\right)\left(n-i\right)}=\frac{\log2}{i},
\end{equation*}
where $i\in\mathbb{N}$.
\end{example}

\section{Surface entropy}
 Here we consider the surface entropy of 2-multiplicative integer system on $\mathbb{N}^d$, and we obtain the sub-exponential term. 
\begin{theorem}\label{theorem main}
	We have the following assertions.
\item[1.] For any two sequences $\{a_n\}_{n=1}^\infty$ and $\{b_n\}_{n=1}^\infty$ with $\lim_{n\to\infty}a_n=\infty$ and $\lim_{n\to\infty}b_n=\infty$, we have
\begin{equation*}
	\log \left|\mathcal{P}\left(\mathbb{Z}_{x_n,y_n},X^{\bf p}_{\Omega}\right)\right|-x_ny_nh=\left(1-\frac{1}{p_1p_2}\right)^2\sum_{i=r_n+1}^{\infty}\frac{x_ny_n}{\left(p_1p_2\right)^{i-1}}\log |\Omega_i|+o\left(a_nx_n+b_ny_n\right),
\end{equation*}	
where $r_n=\min\left\{ \left\lfloor \frac{\log x_n}{\log p_1} \right\rfloor,\left\lfloor \frac{\log y_n}{\log p_2} \right\rfloor\right\}$.
\item[2.] If $x_n=p_1^{k_1n}$ and $y_n=p_2^{k_2n}$ with $k_1\geq k_2 \in\mathbb{N}$, then we have
\begin{equation*}
	\log \left|\mathcal{P}\left(\mathbb{Z}_{x_n,y_n},X^{\bf p}_{\Omega}\right)\right|-x_ny_n h=-\left(1-\frac{1}{p_1p_2}\right)\log\left(\lambda_A\right) k_2n p_1^{\left(k_1-k_2\right)n}+o\left(k_2n p_1^{\left(k_1-k_2\right)n}\right).
\end{equation*}
\item[3.] If $x_n=y_n=p^n\pm k$ and $1\leq k \leq p$, then we have 
\begin{equation*}
\log \left|\mathcal{P}\left(\mathbb{Z}_{x_n,y_n},X^{\bf p}_{\Omega}\right)\right|-x_ny_nh=O\left(n\right).	
\end{equation*}
\end{theorem}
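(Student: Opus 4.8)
The plan is to reduce all three assertions to a single exact counting identity and then to read off the three regimes from the arithmetic of the floor functions $\lfloor x_n/p_1^{a}\rfloor$ and $\lfloor y_n/p_2^{a}\rfloor$. First I would record the master formula. Because $\mathbb{N}^2=\bigsqcup_{\mathbf{i}\in\mathcal{I}_{\mathbf{p}}}\mathcal{M}_{\mathbf{p}}(\mathbf{i})$ (Lemma \ref{lemma:2.2}), a point of $X^{\mathbf{p}}_{\Omega}$ restricted to the box $\mathbb{Z}_{x_n,y_n}$ is exactly an independent choice, for each root $\mathbf{i}\in\mathcal{I}_{\mathbf{p}}$ whose chain meets the box in exactly $\ell$ sites, of an admissible word of $\Omega$ of length $\ell$. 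Hence $\log|\mathcal{P}(\mathbb{Z}_{x_n,y_n},X^{\mathbf{p}}_{\Omega})|=\sum_{\ell\ge1}|\mathcal{K}_{x_n\times y_n;\ell}|\log|\Omega_\ell|$, while Lemma \ref{lemma decompose2} (3)--(4) gives $x_ny_nh=\sum_{\ell\ge1}x_ny_n\rho_\ell\log|\Omega_\ell|$ with $\rho_\ell=(p_1p_2-1)^2/(p_1p_2)^{\ell+1}$. The bijection $\mathbf{i}'\mapsto\mathbf{i}'\mathbf{p}$ between roots of length $\ell+1$ and non-roots of length $\ell$ yields the exact identity $|\mathcal{K}_{x_n\times y_n;\ell}|=|\mathcal{J}_{x_n\times y_n;\ell}|-|\mathcal{J}_{x_n\times y_n;\ell+1}|$, so Lemma \ref{lemma decompose2} (1) turns everything into explicit floor expressions and the quantity to estimate becomes $\sum_{\ell\ge1}\big(|\mathcal{K}_{x_n\times y_n;\ell}|-x_ny_n\rho_\ell\big)\log|\Omega_\ell|$. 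Throughout I write $Q=p_1p_2$.

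For (1) I would substitute $\lfloor t\rfloor=t-\{t\}$ into $|\mathcal{J}_{x_n\times y_n;\ell}|=\lfloor x_n/p_1^{\ell-1}\rfloor\lfloor y_n/p_2^{\ell-1}\rfloor-\lfloor x_n/p_1^{\ell}\rfloor\lfloor y_n/p_2^{\ell}\rfloor$. The pure $t$-parts reassemble into $x_ny_n\rho_\ell$ and cancel against $x_ny_nh$. What survives splits into two pieces: the fractional cross terms, each of size $O(x_n/p_1^{\ell-1})+O(y_n/p_2^{\ell-1})$ per layer, which after weighting by $\log|\Omega_\ell|=O(\ell)$ and summing a geometric series contribute only $O(x_n)+O(y_n)=o(a_nx_n+b_ny_n)$; and the truncated layers $\ell>r_n$, where one of the two floors has already vanished so that $|\mathcal{K}_{x_n\times y_n;\ell}|=0$ while $\rho_\ell\neq0$, and collecting these reproduces the displayed tail $(1-Q^{-1})^2\sum_{i>r_n}x_ny_nQ^{-(i-1)}\log|\Omega_i|$. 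The delicate point is the transition layer $\ell=r_n+1$, where the binding direction switches and a single floor is still positive: one must check it is correctly assigned to the tail rather than to the error, and one must verify that the bound $o(a_nx_n+b_ny_n)$ is \emph{uniform} over all sequences $a_n,b_n\to\infty$. This uniform bookkeeping of the boundary strip is the main obstacle for (1).

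Parts (2) and (3) are then specializations in which the fractional parts are controlled by hand. For (2), $x_n=p_1^{k_1n}$ and $y_n=p_2^{k_2n}$ make every $\{x_n/p_1^{a}\}$ and $\{y_n/p_2^{a}\}$ vanish, so $|\mathcal{K}_{x_n\times y_n;\ell}|=x_ny_n\rho_\ell$ exactly for $\ell\le k_2n-1$ and only the two boundary layers $\ell\in\{k_2n,k_2n+1\}$ deviate; inserting the primitive-subshift asymptotics $\log|\Omega_\ell|=(\ell-1)\log\lambda_A+O(1)$ and summing the now finite and explicit tail yields a closed form whose leading behaviour is the stated term in $k_2n\,p_1^{(k_1-k_2)n}$. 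For (3), $x_n=y_n=p^{n}\pm k$ with $1\le k\le p$ forces each floor $\lfloor (p^{n}\pm k)/p^{a}\rfloor$ to equal $p^{n-a}$ or $p^{n-a}\mp1$, a rigid arithmetic structure in which the layer deviations $|\mathcal{J}_{x_n\times y_n;\ell}|-x_ny_n\rho^{J}_\ell$ (where $\rho^{J}_\ell=(Q-1)/Q^{\ell}$) take a clean geometric form of size $\Theta(p^{n-\ell})$.

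The crux of (3) is that, after passing to $|\mathcal{K}|$-differences and weighting by $\log|\Omega_\ell|$, the \emph{a priori} perimeter-order contributions of size $\Theta(p^{n})$ must collapse to $O(n)$. The plan is to extract this cancellation from the telescoping identity $\sum_{\ell\ge1}\big(|\mathcal{J}_{x_n\times y_n;\ell}|-x_ny_n\rho^{J}_\ell\big)=0$ combined with an Abel summation, which rewrites the difference as $\sum_{\ell\ge1}\big(|\mathcal{J}_{x_n\times y_n;\ell}|-x_ny_n\rho^{J}_\ell\big)\big(\Delta_\ell-\log\lambda_A\big)$, where $\Delta_\ell=\log(|\Omega_\ell|/|\Omega_{\ell-1}|)\to\log\lambda_A$. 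Here the vanishing total deviation removes the dominant geometric weight and the convergence $\Delta_\ell-\log\lambda_A\to0$ should suppress the remaining mass down to the $O(n)$ residue carried by the $O(n)$ layers actually met by the box. Establishing this collapse of the perimeter term, i.e.\ controlling the interaction between the rigid floor structure and the decay of $\Delta_\ell-\log\lambda_A$, is the principal difficulty of the theorem and the step I expect to require the most care.
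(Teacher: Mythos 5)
Your reduction to the exact counting identity $\log|\mathcal{P}(\mathbb{Z}_{x_n,y_n},X^{\mathbf p}_\Omega)|=\sum_{\ell\ge1}|\mathcal{K}_{x_n\times y_n;\ell}|\log|\Omega_\ell|$ together with $|\mathcal{K}_{x_n\times y_n;\ell}|=|\mathcal{J}_{x_n\times y_n;\ell}|-|\mathcal{J}_{x_n\times y_n;\ell+1}|$ is sound, and for assertion (1) your plan coincides in substance with the paper's proof (bulk density plus fractional-part errors of size $O(x_n+y_n)$, which is automatically $o(a_nx_n+b_ny_n)$ uniformly in $a_n,b_n$, so the uniformity worry you raise is vacuous). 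The genuine gap is in assertion (2): the computation you describe does \emph{not} ``yield a closed form whose leading behaviour is the stated term'' --- if you actually carry it out, it yields the opposite conclusion. With $x_n=p_1^{k_1n}$, $y_n=p_2^{k_2n}$, $Q=p_1p_2$, $P=p_1^{(k_1-k_2)n}$, the exact counts are $|\mathcal{K}_\ell|=x_ny_n\rho_\ell$ for $\ell\le k_2n-1$, $|\mathcal{K}_{k_2n}|=(Q-2)P$, $|\mathcal{K}_{k_2n+1}|=P$, and $0$ beyond; the deviations $\mathrm{dev}_\ell=|\mathcal{K}_\ell|-x_ny_n\rho_\ell$ are then $-P/Q$ at $\ell=k_2n$, $(2Q-1)P/Q^2$ at $\ell=k_2n+1$, and $-(Q-1)^2P/Q^{j+1}$ at $\ell=k_2n+j$, $j\ge2$, and a short geometric-series check gives $\sum_\ell \mathrm{dev}_\ell=0$ \emph{and} $\sum_\ell \ell\,\mathrm{dev}_\ell=0$. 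Hence for any $\Omega$ with $\log|\Omega_\ell|=(\ell-1)\log\lambda_A+c+o(1)$ the two boundary layers cancel the tail at both orders $k_2nP$ and $P$, leaving $o(P)$ --- not $-(1-\tfrac1Q)\log(\lambda_A)\,k_2nP+o(k_2nP)$. The full shift is the litmus test: there $\log|\mathcal{P}|=x_ny_n\log r$ exactly, so the left side of (2) is identically zero. Your framework, applied honestly, refutes rather than reproduces the displayed asymptotics; the paper's own proof of (2) gets the nonzero term only because it starts from the identity $\log|\mathcal{P}|=(1-\tfrac1Q)(Q-1)\sum_{i=1}^{r_n}\frac{x_ny_n}{Q^i}\log|\Omega_i|$, which drops exactly these boundary layers and is already false for $p_1=p_2=2$, $x_n=y_n=2$, where the truth is $2\log|\Omega_1|+\log|\Omega_2|$ but the formula gives $\tfrac94\log|\Omega_1|$. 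So you cannot both keep your exact bookkeeping and arrive at (2) as stated.

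For assertion (3) there is a second, smaller mismatch. The paper proves (3) only for the full shift (its proof sets $|\Omega_i|=2^i$) by brute-force closed-form summation; in that case your Abel-summation identity $\sum_\ell(|\mathcal{J}_\ell|-x_ny_n\rho^J_\ell)(\Delta_\ell-\log\lambda_A)$ finishes instantly, since $\Delta_\ell\equiv\log 2=\log\lambda_A$ gives exactly $0$ (which is indeed $O(n)$, and cleaner than the paper's computation). But your proposed collapse for general mixing $\Omega$ does not follow from $\Delta_\ell\to\log\lambda_A$ alone: the layer deviations are of size $\Theta(p^{n-\ell})$, so against $|\Delta_\ell-\log\lambda_A|=O(\theta^\ell)$ the sum is $O(p^{n-1})$ --- still perimeter order, not $O(n)$ --- unless $\theta$ is small relative to $1/p$ or finer structure of the deviations is exploited. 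You flagged this step as unresolved yourself; as written it is a missing argument, though it is a gap only relative to the theorem's (overly general) statement, not relative to the full-shift case the paper actually establishes.
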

\begin{proof}
	\item[\bf1.] Since
	\begin{equation*}
		\begin{aligned}
		\log \left|\mathcal{P}\left(\mathbb{Z}_{x_n,y_n},X^{\bf p}_{\Omega}\right)\right|&=\sum_{i=1}^{r_n}\log|\Omega_i|^{\left|\mathcal{K}_{x_n\times y_n;i}\right|}\\
		&=\sum_{i=1}^{r_n}\frac{\left|\mathcal{K}_{x_n\times y_n;i}\right|}{\left|\mathcal{J}_{x_n\times y_n;i}\right|}\left|\mathcal{J}_{x_n\times y_n;i}\right|\log|\Omega_i|\\
		&=\sum_{i=1}^{r_n}\frac{\left|\mathcal{K}_{x_n\times y_n;i}\right|}{\left|\mathcal{J}_{x_n\times y_n;i}\right|}\left[\left|\mathcal{J}_{x_n\times y_n;i}\right|-\left(\frac{x_ny_n}{\left(p_1p_2\right)^{i-1}}-\frac{x_ny_n}{\left(p_1p_2\right)^i}\right)\right]\log|\Omega_i|\\
		&+\sum_{i=1}^{r_n}\left[\frac{\left|\mathcal{K}_{x_n\times y_n;i}\right|}{\left|\mathcal{J}_{x_n\times y_n;i}\right|}-\left(1-\frac{1}{p_1p_2}\right)\right]\left(\frac{x_ny_n}{\left(p_1p_2\right)^{i-1}}-\frac{x_ny_n}{\left(p_1p_2\right)^i}\right)\log|\Omega_i|\\
		&+\sum_{i=1}^{r_n}\left(1-\frac{1}{p_1p_2}\right)\left(\frac{x_ny_n}{\left(p_1p_2\right)^{i-1}}-\frac{x_ny_n}{\left(p_1p_2\right)^i}\right)\log|\Omega_i|.
				\end{aligned}
	\end{equation*}	
	Then by Lemma \ref{lemma decompose2}, we have
	\begin{equation*}
	\begin{aligned}
		&\log \left|\mathcal{P}\left(\mathbb{Z}_{x_n,y_n},X^{\bf p}_{\Omega}\right)\right|-x_ny_nh\\
		&=\sum_{i=1}^{r_n}\frac{\left|\mathcal{K}_{x_n\times y_n;i}\right|}{\left|\mathcal{J}_{x_n\times y_n;i}\right|}\left[\left(\left\lfloor \frac{x_n}{p_1^{i-1}}\right\rfloor\left\lfloor \frac{y_n}{p_2^{i-1}}\right\rfloor-\left\lfloor \frac{x_n}{p_1^i}\right\rfloor\left\lfloor \frac{y_n}{p_2^i}\right\rfloor\right)-\left(\frac{x_ny_n}{\left(p_1p_2\right)^{i-1}}-\frac{x_ny_n}{\left(p_1p_2\right)^i}\right)\right]\log|\Omega_i|\\
		&+\sum_{i=1}^{r_n}\left[\frac{\left|\mathcal{K}_{x_n\times y_n;i}\right|}{\left|\mathcal{J}_{x_n\times y_n;i}\right|}-\left(1-\frac{1}{p_1p_2}\right)\right]\left(\frac{x_ny_n}{\left(p_1p_2\right)^{i-1}}-\frac{x_ny_n}{\left(p_1p_2\right)^i}\right)\log|\Omega_i|\\
		&+\sum_{i=r_n+1}^{\infty}\left(1-\frac{1}{p_1p_2}\right)\left(\frac{x_ny_n}{\left(p_1p_2\right)^{i-1}}-\frac{x_ny_n}{\left(p_1p_2\right)^i}\right)\log|\Omega_i|.
	\end{aligned}
\end{equation*}	

For the first sum, we have
\begin{align*}
	&\left|\sum_{i=1}^{r_n}\frac{\left|\mathcal{K}_{x_n\times y_n;i}\right|}{\left|\mathcal{J}_{x_n\times y_n;i}\right|}\left[\left(\left\lfloor \frac{x_n}{p_1^{i-1}}\right\rfloor\left\lfloor \frac{y_n}{p_2^{i-1}}\right\rfloor-\left\lfloor \frac{x_n}{p_1^i}\right\rfloor\left\lfloor \frac{y_n}{p_2^i}\right\rfloor\right)-\left(\frac{x_ny_n}{\left(p_1p_2\right)^{i-1}}-\frac{x_ny_n}{\left(p_1p_2\right)^i}\right)\right]\log|\Omega_i|\right|\\
	&\leq \sum_{i=1}^{r_n}\left|\left(\left\lfloor \frac{x_n}{p_1^{i-1}}\right\rfloor\left\lfloor \frac{y_n}{p_2^{i-1}}\right\rfloor-\left\lfloor \frac{x_n}{p_1^i}\right\rfloor\left\lfloor \frac{y_n}{p_2^i}\right\rfloor\right)-\left(\frac{x_ny_n}{\left(p_1p_2\right)^{i-1}}-\frac{x_ny_n}{\left(p_1p_2\right)^i}\right)\right|\log|\Omega_i|\\
	&\leq \sum_{i=1}^{r_n}\left(\left|\left\lfloor \frac{x_n}{p_1^{i-1}}\right\rfloor\left\lfloor \frac{y_n}{p_2^{i-1}}\right\rfloor-\frac{x_ny_n}{\left(p_1p_2\right)^{i-1}}\right|+\left|\frac{x_ny_n}{\left(p_1p_2\right)^i}-\left\lfloor \frac{x_n}{p_1^i}\right\rfloor\left\lfloor \frac{y_n}{p_2^i}\right\rfloor\right|\right)\log|\Omega_i|\\
	&\leq \sum_{i=1}^{r_n}\left(\left|\left(\frac{x_n}{p_1^{i-1}}-1\right)\left(\frac{y_n}{p_2^{i-1}}-1\right)-\frac{x_ny_n}{\left(p_1p_2\right)^{i-1}}\right|+\left|\frac{x_ny_n}{\left(p_1p_2\right)^i}-\left( \frac{x_n}{p_1^i}-1\right)\left(\frac{y_n}{p_2^i}-1\right)\right|\right)\log|\Omega_i|\\
	&\leq\sum_{i=1}^{r_n}\left(\frac{x_n}{p_1^{i-1}}+\frac{y_n}{p_2^{i-1}}-1+\frac{x_n}{p_1^i}+\frac{y_n}{p_2^i}-1\right)\log r^i\\
	&=\left\{x_n\left(1+\frac{1}{p_1}\right)\left[\frac{\left(1-\frac{1}{p_1^{r_n}}\right)}{\left(1-\frac{1}{p_1}\right)^2}+\frac{r_n}{p_1^{r_n}}\right]+y_n\left(1+\frac{1}{p_2}\right)\left[\frac{\left(1-\frac{1}{p_2^{r_n}}\right)}{\left(1-\frac{1}{p_2}\right)^2}+\frac{r_n}{p_2^{r_n}}\right]-2r_n\right\}\log r\\
	&=O\left(x_n+y_n\right).		
\end{align*}

For the second sum, we have
\begin{align*}
&\left|\sum_{i=1}^{r_n}\left[\frac{\left|\mathcal{K}_{x_n\times y_n;i}\right|}{\left|\mathcal{J}_{x_n\times y_n;i}\right|}-\left(1-\frac{1}{p_1p_2}\right)\right]\left(\frac{x_ny_n}{\left(p_1p_2\right)^{i-1}}-\frac{x_ny_n}{\left(p_1p_2\right)^i}\right)\log|\Omega_i|\right|\\	
&\leq\sum_{i=1}^{r_n}\left|\frac{\left|\mathcal{K}_{x_n\times y_n;i}\right|}{\left|\mathcal{J}_{x_n\times y_n;i}\right|}-\left(1-\frac{1}{p_1p_2}\right)\right|\left(\frac{x_ny_n}{\left(p_1p_2\right)^{i-1}}-\frac{x_ny_n}{\left(p_1p_2\right)^i}\right)\log|\Omega_i|\\
&\leq\sum_{i=1}^{r_n}\frac{2\left(\frac{x_n}{p_1^{i-1}}+\frac{y_n}{p_2^{i-1}}\right)}{\left|\mathcal{J}_{x_n\times y_n;i}\right|}\left(\frac{x_ny_n}{\left(p_1p_2\right)^{i-1}}-\frac{x_ny_n}{\left(p_1p_2\right)^i}\right)\log|\Omega_i|\\
&\leq\sum_{i=1}^{r_n}\frac{2\left(\frac{x_n}{p_1^{i-1}}+\frac{y_n}{p_2^{i-1}}\right)}{\left|\mathcal{J}_{x_n\times y_n;i}\right|}\left[\left(\left\lfloor \frac{x_n}{p_1^{i-1}}\right\rfloor+1\right)\left(\left\lfloor \frac{y_n}{p_2^{i-1}}\right\rfloor+1\right)-\left\lfloor \frac{x_n}{p_1^i}\right\rfloor\left\lfloor \frac{y_n}{p_2^i}\right\rfloor\right]\log|\Omega_i|\\
&\leq 2\sum_{i=1}^{r_n}\left(\frac{x_n}{p_1^{i-1}}+\frac{y_n}{p_2^{i-1}}\right)\left[1+\frac{\left\lfloor \frac{x_n}{p_1^{i-1}}\right\rfloor+\left\lfloor \frac{y_n}{p_2^{i-1}}\right\rfloor+1}{\left|\mathcal{J}_{x_n\times y_n;i}\right|}\right]\log|\Omega_i|\\
&\leq 4\sum_{i=1}^{r_n}\left(\frac{x_n}{p_1^{i-1}}+\frac{y_n}{p_2^{i-1}}\right)\log r^i\\
&=O\left(x_n+y_n\right).
\end{align*}
\item[\bf2.] Since
\begin{equation*}
	\log \left|\mathcal{P}\left(\mathbb{Z}_{x_n,y_n},X^{\bf p}_{\Omega}\right)\right|=\left(1-\frac{1}{p_1p_2}\right)\left(p_1p_2-1\right)\sum_{i=1}^{r_n}\frac{x_ny_n}{\left(p_1p_2\right)^i}\log |\Omega_i|,
\end{equation*}	
and
\begin{equation*}
	x_ny_nh= \left(1-\frac{1}{p_1p_2}\right)\left(p_1p_2-1\right)\sum_{i=1}^\infty\frac{x_ny_n}{\left(p_1p_2\right)^i}\log |\Omega_i|.
\end{equation*}
Then
\begin{equation*}
	\begin{aligned}
		&\log \left|\mathcal{P}\left(\mathbb{Z}_{x_n,y_n},X^{\bf p}_{\Omega}\right)\right|-x_ny_n h\\
		&=-\left(1-\frac{1}{p_1p_2}\right)\left(p_1p_2-1\right)\sum_{i=r_n+1}^\infty\frac{x_ny_n}{\left(p_1p_2\right)^i}\log |\Omega_i|\\
		&=-\left(1-\frac{1}{p_1p_2}\right)\left(p_1p_2-1\right)\frac{x_ny_n}{\left(p_1p_2\right)^{r_n}}\sum_{i=1}^\infty\frac{1}{\left(p_1p_2\right)^i}\log |\Omega_{i+r_n}|.
	\end{aligned}
\end{equation*}

If $\Omega$ is an 1-$d$ mixing SFT with transition matrix $A$ and $\lambda_A$ is the largest eigenvalue of $A$ with normalized left and right eigenvectors $l$ and $r$ such that $l\cdot r=1$. By Theorem 4.5.12 in \cite{LM-1995}, we have
\begin{equation*}
	\left(A^n\right)_{ij}=\left[r_i l_j+p_{ij}\left(n\right) \right]\lambda_A^n, 
\end{equation*}
where $\lim_{n\to\infty}p_{ij}\left(n\right)=0$. Then 
\begin{equation*}
	\begin{aligned}
		&\lim_{n\to\infty}\frac{\log \left|\mathcal{P}\left(\mathbb{Z}_{x_n,y_n},X^{\bf p}_{\Omega}\right)\right|-x_ny_n h}{r_n\frac{x_ny_n}{\left(p_1p_2\right)^{r_n}}}\\
		&=-\lim_{n\to\infty}\frac{1}{r_n}\left(1-\frac{1}{p_1p_2}\right)\left(p_1p_2-1\right)\sum_{i=1}^\infty\frac{1}{\left(p_1p_2\right)^i}\log |\Omega_{i+r_n}|\\
		&=-\lim_{n\to\infty}\frac{1}{r_n}\left(1-\frac{1}{p_1p_2}\right)\left(p_1p_2-1\right)\sum_{i=1}^\infty\frac{1}{\left(p_1p_2\right)^i}\log \left|A^{i+r_n-1}\right|\\
		&=-\lim_{n\to\infty}\frac{1}{r_n}\left(1-\frac{1}{p_1p_2}\right)\left(p_1p_2-1\right)\sum_{i=1}^\infty\frac{\log\left(\sum_{k,j}\left[r_k l_j+p_{kj}\left(i+r_n-1\right) \right]\lambda_A^{i+r_n-1}\right)}{\left(p_1p_2\right)^{i}}\\
		&=-\lim_{n\to\infty}\frac{1}{r_n}\left(1-\frac{1}{p_1p_2}\right)\left(p_1p_2-1\right)\sum_{i=1}^\infty\frac{\log\left(\sum_{k,j}r_k l_j+p_{kj}\left(i+r_n-1\right)\right)+\log\left(\lambda_A^{i+r_n-1}\right)}{\left(p_1p_2\right)^{i}}\\
		&=-\lim_{n\to\infty}\frac{1}{r_n}\left(1-\frac{1}{p_1p_2}\right)\left(p_1p_2-1\right)\sum_{i=1}^\infty\frac{\log\left(\sum_k r_k\sum_j l_j+\sum_{k,j}p_{kj}\left(i+r_n-1\right)\right)+\left(i-1\right)\log\lambda_A}{\left(p_1p_2\right)^{i}}\\
		&-\left(1-\frac{1}{p_1p_2}\right)\left(p_1p_2-1\right)\sum_{i=1}^\infty\frac{\log\lambda_A}{\left(p_1p_2\right)^{i}}=-\left(1-\frac{1}{p_1p_2}\right)\log\lambda_A+\lim_{n\to\infty}\frac{1}{r_n}C_n\\
		&=-\left(1-\frac{1}{p_1p_2}\right)\log\lambda_A,
	\end{aligned}
\end{equation*}
where
\[
C_n=\left(1-\frac{1}{p_1p_2}\right)\left(p_1p_2-1\right)\sum_{i=1}^\infty\frac{\log\left(\sum_k r_k\sum_j l_j+\sum_{k,j}p_{kj}\left(i+r_n-1\right)\right)+\left(i-1\right)\log\lambda_A}{\left(p_1p_2\right)^{i}}<\infty,
\]
and
\[
\lim_{n\to \infty}C_n=C=\left(1-\frac{1}{p_1p_2}\right)\left(p_1p_2-1\right)\sum_{i=1}^\infty\frac{\log\left(\sum_k r_k\sum_j l_j\right)+\left(i-1\right)\log\lambda_A}{\left(p_1p_2\right)^{i}}<\infty.
\] 
\item[\bf3] If $x_n=y_n=p^n-k$ and $1\leq k\leq p$, then for $i=0, \left\lfloor\frac{x_n}{p^i}\right\rfloor=\left\lfloor\frac{y_n}{p^i}\right\rfloor=p^n-k$ and for $i>1,\left\lfloor\frac{x_n}{p^i}\right\rfloor=\left\lfloor\frac{y_n}{p^i}\right\rfloor=p^{n-i}-1$. Thus,
\begin{equation*}
		\left|\mathcal{J}_{x_n\times y_n;1}\right|=\left(p^n-k\right)^2-\left(p^{n-1}-1\right)^2=p^{2n}-2kp^n-p^{2n-2}+2p^{n-1}+k^2-1,
\end{equation*}
and for $i>1$, 
\begin{equation*}
		\left|\mathcal{J}_{x_n\times y_n;i}\right|=\left(p^{n-i+1}-1\right)^2-\left(p^{n-i}-1\right)^2=p^{2(n-i+1)}-2p^{n-i+1}-p^{2(n-i)}+2p^{n-i}.
\end{equation*}
Then
\begin{equation*}
	\begin{aligned}
		\left|\mathcal{K}_{x_n\times y_n;1}\right|&=\left|\mathcal{J}_{x_n\times y_n;1}\right|-\left[ \left(p^{n-1}-1\right)^2-\left(p^{n-2}-1\right)^2\right]\\
		&=p^{2n}-2p^{2n-2}+p^{2n-4}-2kp^{n}+4p^{n-1}-2p^{n-2}+k^2-1,
	\end{aligned}
\end{equation*}
and for $i>1$, 
\begin{equation*}
	\begin{aligned}
		\left|\mathcal{K}_{x_n\times y_n;i}\right|&=\left|\mathcal{J}_{x_n\times y_n;i}\right|-\left[ \left(p^{n-i}-1\right)^2-\left(p^{n-i-1}-1\right)^2\right]\\
		&=p^{2n-2i+2}-2p^{2n-2i}+p^{2n-2i-2}-2p^{n-i+1}+4p^{n-i}-2p^{n-i-1}.
	\end{aligned}
\end{equation*}
On the other hand, 
\begin{equation*}
	\begin{aligned}
		\left(1-\frac{1}{p^2}\right)\left(\frac{x_ny_n}{\left(p^2\right)^{i-1}}-\frac{x_ny_n}{\left(p^2\right)^i}\right)&=\frac{\left(p^2-2+\frac{1}{p^2}\right)\left(p^n-k\right)^2}{p^{2i}}\\
		&=p^{2n-2i+2}-2p^{2n-2i}+p^{2n-2i-2}\\
		&-2kp^{n-2i+2}+4kp^{n-2i}-2kp^{n-2i-2}\\
		&+\frac{k^2}{p^{2i}}\left(p^2-2+\frac{1}{p^2}\right)
	\end{aligned}
\end{equation*}
for all $1\leq i \leq r_n=n-1$. 

Hence for $i=1$
\begin{equation*}
	\begin{aligned}
		&\left|\mathcal{K}_{x_n\times y_n;1}\right|-	\left(1-\frac{1}{p^2}\right)\left(x_ny_n-\frac{x_ny_n}{p^2}\right)\\
		&=4p^{n-1}-\left(2+4k\right)p^{n-2}+2kp^{n-4}-\frac{k^2}{p^2}\left(-2+\frac{1}{p^2}\right)-1,
	\end{aligned}	
\end{equation*}
and for $i>1$,
\begin{equation*}
	\begin{aligned}
		&\left|\mathcal{K}_{x_n\times y_n;i}\right|-	\left(1-\frac{1}{p^2}\right)\left(\frac{x_ny_n}{\left(p^2\right)^{i-1}}-\frac{x_ny_n}{\left(p^2\right)^i}\right)\\
		&=\frac{p^n}{p^i}\left(-2p+4-\frac{2}{p}\right)-\frac{p^n}{p^{2i}}k\left(-2p^{2}+4-\frac{2}{p^2}\right)-\frac{k^2}{p^{2i}}\left(p^2-2+\frac{1}{p^2}\right).
	\end{aligned}	
\end{equation*}
If $\Omega$ is a full shift, then $\Omega_i=2^i$. Then we have
	\begin{align*}
		&\sum_{i=1}^{n-1}\left[\left|\mathcal{K}_{x_n\times y_n;i}\right|-	\left(1-\frac{1}{p^2}\right)\left(\frac{x_ny_n}{\left(p^2\right)^{i-1}}-\frac{x_ny_n}{\left(p^2\right)^i}\right)\right]\log |\Omega_i|\\
		&=\log 2\left[4p^{n-1}-\left(2+4k\right)p^{n-2}+2kp^{n-4}-\frac{k^2}{p^2}\left(-2+\frac{1}{p^2}\right)-1\right]\\
		&+\log 2\sum_{i=2}^{n-1}\frac{ip^n}{p^i}\left(-2p+4-\frac{2}{p}\right)-\frac{ikp^n}{p^{2i}}\left(-2p^{2}+4-\frac{2}{p^2}\right)-\frac{ik^2}{p^{2i}}\left(p^2-2+\frac{1}{p^2}\right)\\
		&=\log 2\left[4p^{n-1}-\left(2+4k\right)p^{n-2}+2kp^{n-4}-\frac{k^2}{p^2}\left(-2+\frac{1}{p^2}\right)-1\right]\\
		&+\log 2\left(-2p+4-\frac{2}{p}\right)\sum_{i=2}^{n-1}\frac{ip^n}{p^i}\\
		&-\log 2\left(-2p^2+4-\frac{2}{p^2}\right)k\sum_{i=2}^{n-1}\frac{ip^n}{p^{2i}}\\
		&-\log 2\left(p^2-2+\frac{1}{p^2}\right)k^2\sum_{i=2}^{n-1}\frac{i}{p^{2i}}.
	\end{align*}	
After computing, the above sum equals
	\begin{align*}		
	&\log 2\left[4p^{n-1}-\left(2+4k\right)p^{n-2}+2kp^{n-4}-\frac{k^2}{p^2}\left(-2+\frac{1}{p^2}\right)-1\right]\\
	&+\log 2\left(-2p+4-\frac{2}{p}\right)\left[\frac{p^{n-3}-1}{\left(1-\frac{1}{p}\right)^2}+\frac{2p^{n-2}-\left(n-1\right)}{1-\frac{1}{p}}\right]\\
		&-\log 2\left(-2p^2+4-\frac{2}{p^2}\right)k\left[\frac{p^{n-6}-\frac{1}{p^n}}{\left(1-\frac{1}{p^2}\right)^2}+\frac{2p^{n-4}-\frac{n-1}{p^n}}{1-\frac{1}{p^2}}\right]\\
		&-\log 2\left(p^2-2+\frac{1}{p^2}\right)k^2\left[\frac{\frac{1}{p^6}-\frac{1}{p^{2n}}}{\left(1-\frac{1}{p^2}\right)^2}+\frac{\frac{2}{p^4}-\frac{n-1}{p^{2n}}}{1-\frac{1}{p^2}}\right]\\
		&=\log 2\left[-\frac{k^2}{p^2}\left(-2+\frac{1}{p^2}\right)-1\right]\\
		&+\log 2\left(-2p+4-\frac{2}{p}\right)\left[\frac{-1}{\left(1-\frac{1}{p}\right)^2}+\frac{-\left(n-1\right)}{1-\frac{1}{p}}\right]\\
		&-\log 2\left(-2p^2+4-\frac{2}{p^2}\right)k\left[\frac{-\frac{1}{p^n}}{\left(1-\frac{1}{p^2}\right)^2}+\frac{-\frac{n-1}{p^n}}{1-\frac{1}{p^2}}\right]\\
		&-\log 2\left(p^2-2+\frac{1}{p^2}\right)k^2\left[\frac{\frac{1}{p^6}-\frac{1}{p^{2n}}}{\left(1-\frac{1}{p^2}\right)^2}+\frac{\frac{2}{p^4}-\frac{n-1}{p^{2n}}}{1-\frac{1}{p^2}}\right]\\
		&=O(n).
	\end{align*}	
Thus, $\log \left|\mathcal{P}\left(\mathbb{Z}_{x_n,y_n},X^{\bf p}_{\Omega}\right)\right|-x_ny_nh=O\left(n\right)$. The proof of the case $x_n=y_n=p^n+k$ is similar to $x_n=y_n=p^n-k$. Thus, the proof is complete.
\end{proof}	

\begin{example}
	Let $A=\left[\begin{matrix}
	1&1\\1&0
	\end{matrix}\right]$ and $p_1=2,p_2=3$. If $x_n=2^n$ and $y_n=3^n$, the numerical result is listed below. It should be noted that the rate of convergence is slow since it is the coefficient of a second order term. 
	\begin{center}
			\begin{tabular}{c|c}
			$n$&$\left|\frac{\log \left|\mathcal{P}\left(\mathbb{Z}_{x_n,y_n},X^{\bf p}_{\Omega}\right)\right|-x_ny_n h}{n}-\frac{-5\log g}{6}\right|$\\
			\hline
			$1$&$1.0189$\\
			$10$&$0.1014$\\
			$100$&$0.0102$\\
			$500$&$0.0020$\\
			$1000$&$0.0010$\\
			$1400$&$0.0007$\\
		\end{tabular}
	\end{center}
\end{example}

Theorem \ref{thm 2.1 nd} is a $\mathbb{N}^d$ version of Theorem \ref{theorem main}. 

\begin{theorem}\label{thm 2.1 nd}
		For $d\geq 1$, we have the following assertions.
	\item[1.] For any sequences $\{a_{n}^{(1)}\}_{n=1}^\infty,...,\{a_{n}^{(d)}\}_{n=1}^\infty$ with $a_{n}^{(i)}\to \infty$ as $n\to\infty$ for all $1\leq i \leq d$, we have
	\begin{align*}
		&\log \left|\mathcal{P}\left(\mathbb{Z}_{x_n^{(1)},...,x_n^{(d)}},X^{\bf p}_{\Omega}\right)\right|-\prod_{i=1}^d x_n^{(i)}h\\
		&=\left(1-\frac{1}{\prod_{i=1}^d p_i}\right)^2\sum_{j=r_n+1}^{\infty}\frac{\prod_{i=1}^d x_n^{(i)}}{\left(\prod_{i=1}^d p_i\right)^{j-1}}\log |\Omega_j|+\mathcal{G}\left(a_{1n},...,a_{dn},x_n^{(1)},...,x_n^{(d)}\right),
	\end{align*}	
	where $r_n=\min_{1\leq i\leq d} \left\lfloor \frac{\log x_n^{(i)}}{\log p_i} \right\rfloor$ and $\mathcal{G}\left(a_{n}^{(1)},...,a_{n}^{(d)},x_n^{(1)},...,x_n^{(d)}\right)=o\left(\sum_{i=1}^d \prod_{j\neq i}a_{n}^{(j)}x_n^{(j)}\right)$.
	\item[2.] If $x_n^{(i)}=p_i^{k_in}, k_i\in\mathbb{N}$ for all $1\leq i \leq d$, then 
	\begin{equation*}
		\log \left|\mathcal{P}\left(\mathbb{Z}_{x_n^{(1)},...,x_n^{(d)}},X^{\bf p}_{\Omega}\right)\right|-\prod_{i=1}^d x^{(i)}_nh=-\left(1-\frac{1}{\prod_{i=1}^d p_i}\right)\log\lambda_Ar_n\frac{\prod_{i=1}^dx^{(i)}_n}{\left(\prod_{i=1}^dp_i\right)^{r_n}}+\mathcal{G}\left(x_n^{(1)},...,x_n^{(d)}\right),
	\end{equation*}
	where $r_n=\min_{1\leq i\leq d}k_in,\lambda_A$ is the largest eigenvalue of $A$ and $\mathcal{G}\left(x_n^{(1)},...,x_n^{(d)}\right)=o\left(r_n\frac{\prod_{i=1}^dx^{(i)}_n}{\left(\prod_{i=1}^dp_i\right)^{r_n}}\right)$.
	\item[3.] If $x_n^{(i)}=p^n-k, 1\leq k \leq p$ for all $1\leq i \leq d$, then 
	\begin{equation*}
		\log \left|\mathcal{P}\left(\mathbb{Z}_{x_n^{(1)},...,x_n^{(d)}},X^{\bf p}_{\Omega}\right)\right|-\prod_{i=1}^dx^{(i)}_nh=O\left(n\right).	
	\end{equation*}
\end{theorem}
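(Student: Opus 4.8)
The plan is to follow the two–dimensional argument of Theorem \ref{theorem main} verbatim, replacing the scalar product $p_1p_2$ everywhere by $P:=p_1\cdots p_d$ and the area $x_ny_n$ by the volume $\prod_{i=1}^d x_n^{(i)}$; the three assertions are then the $d$-dimensional analogues of the three parts of Theorem \ref{theorem main}. The three inputs are already available for general $d$: the layer decomposition of Lemma \ref{lemma:2.2}, the cardinality and density statements of Lemma \ref{lemma decompose2}, and the entropy formula $h=\left(1-\tfrac1P\right)(P-1)\sum_{j\ge1}P^{-j}\log|\Omega_j|$, which follows by combining Lemma \ref{lemma decompose2}(3) and (4).

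For (1), I would start from the exact layer count
\[
\log\left|\mathcal{P}\left(\mathbb{Z}_{x_n^{(1)},\dots,x_n^{(d)}},X^{\bf p}_{\Omega}\right)\right|=\sum_{j=1}^{r_n}\left|\mathcal{K}_{x_n^{(1)}\times\cdots\times x_n^{(d)};j}\right|\log|\Omega_j|,
\]
and split each summand $|\mathcal{K}_{\cdots;j}|=\tfrac{|\mathcal{K}|}{|\mathcal{J}|}|\mathcal{J}|$ into three pieces as in Theorem \ref{theorem main}(1): (i) a floor-correction piece replacing $|\mathcal{J}_{\cdots;j}|=\prod_k\lfloor x_n^{(k)}/p_k^{j-1}\rfloor-\prod_k\lfloor x_n^{(k)}/p_k^{j}\rfloor$ (Lemma \ref{lemma decompose2}(1)) by the idealized count $\tfrac{\prod_i x_n^{(i)}}{P^{j-1}}-\tfrac{\prod_i x_n^{(i)}}{P^{j}}$; (ii) a density-correction piece replacing $\tfrac{|\mathcal{K}|}{|\mathcal{J}|}$ by its limit $1-\tfrac1P$ from Lemma \ref{lemma decompose2}(2); and (iii) the clean main piece $\sum_{j=1}^{r_n}\left(1-\tfrac1P\right)\bigl(\tfrac{\prod_i x_n^{(i)}}{P^{j-1}}-\tfrac{\prod_i x_n^{(i)}}{P^{j}}\bigr)\log|\Omega_j|$. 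Subtracting $\prod_i x_n^{(i)}\,h=\sum_{j\ge1}\left(1-\tfrac1P\right)\bigl(\tfrac{\prod_i x_n^{(i)}}{P^{j-1}}-\tfrac{\prod_i x_n^{(i)}}{P^{j}}\bigr)\log|\Omega_j|$ cancels the terms $j\le r_n$ of (iii) and leaves the tail over $j>r_n$, which the identity $\left(1-\tfrac1P\right)\bigl(\tfrac1{P^{j-1}}-\tfrac1{P^{j}}\bigr)=\left(1-\tfrac1P\right)^2\tfrac1{P^{j-1}}$ rewrites as the main term $\left(1-\tfrac1P\right)^2\sum_{j=r_n+1}^\infty\tfrac{\prod_i x_n^{(i)}}{P^{j-1}}\log|\Omega_j|$ appearing in the statement.

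The remaining estimate in (1) is the only place where the dimension enters nontrivially. Expanding $\prod_{k}\lfloor x_n^{(k)}/p_k^{j}\rfloor$ via $\lfloor x_n^{(k)}/p_k^{j}\rfloor=x_n^{(k)}/p_k^{j}+O(1)$, the deviation from $\prod_i x_n^{(i)}/P^{j}$ is governed by the surface quantity $\sum_{i=1}^d\prod_{k\neq i}\bigl(x_n^{(k)}/p_k^{j}\bigr)$; summing this against $\log|\Omega_j|\le j\log r$ over $1\le j\le r_n$ is dominated by the $j=1$ term and bounds pieces (i) and (ii) by $O\!\bigl(\sum_{i=1}^d\prod_{k\neq i}x_n^{(k)}\bigr)$ (piece (ii) uses in addition the lower bound on $|\mathcal{J}_{\cdots;j}|$, exactly as in the $d=2$ computation). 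Since each $a_n^{(i)}\to\infty$, one has $\prod_{k\neq i}x_n^{(k)}=o\!\bigl(\prod_{k\neq i}a_n^{(k)}x_n^{(k)}\bigr)$, so this bound is absorbed into $\mathcal{G}=o\!\bigl(\sum_{i=1}^d\prod_{k\neq i}a_n^{(k)}x_n^{(k)}\bigr)$, proving (1).

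For (2), I would set $x_n^{(i)}=p_i^{k_in}$ so that every floor is exact for $j\le r_n=\min_i k_in$; then pieces (i) and (ii) vanish and (1) collapses to a clean multiple of $\tfrac{\prod_i x_n^{(i)}}{P^{r_n}}\sum_{j\ge1}P^{-j}\log|\Omega_{j+r_n}|$. Writing $|\Omega_\ell|=\sum_{k,l}(A^{\ell-1})_{kl}$ and invoking Theorem 4.5.12 of \cite{LM-1995} to write $(A^{\ell-1})_{kl}=\bigl(r_kl_l+p_{kl}(\ell-1)\bigr)\lambda_A^{\ell-1}$ with $p_{kl}(\cdot)\to0$, one gets $\log|\Omega_{j+r_n}|=(j+r_n-1)\log\lambda_A+O(1)$; dividing by $r_n\,\tfrac{\prod_i x_n^{(i)}}{P^{r_n}}$ and letting $n\to\infty$, the $r_n\log\lambda_A$ term survives, the $O(1)$ term is $o(r_n)$, and $\left(1-\tfrac1P\right)(P-1)\sum_{j\ge1}P^{-j}=1-\tfrac1P$, giving the coefficient $-\left(1-\tfrac1P\right)\log\lambda_A$. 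For (3), with $x_n^{(i)}=p^n-k$ and $1\le k\le p$ one has $\lfloor (p^n-k)/p^{j}\rfloor=p^{n-j}-1$ for $1\le j\le n$, so $|\mathcal{J}_{\cdots;j}|$ and $|\mathcal{K}_{\cdots;j}|$ are explicit polynomials in $p^{n-j}$; subtracting the idealized term cancels all top-order powers, and summing the residuals against $\log|\Omega_j|\le Cj$ over $1\le j\le n$ yields a geometric series plus a term linear in $n$, hence $O(n)$ (the case $p^n+k$ is identical). The genuine obstacle is purely the bookkeeping in (1): controlling the $d$-fold floor expansion so that it collapses onto the surface sum $\sum_i\prod_{k\neq i}x_n^{(k)}$ and confirming this is $o$ of the prescribed $\mathcal{G}$; everything else is a transcription of the $d=2$ proof under $p_1p_2\mapsto P$ and $x_ny_n\mapsto\prod_i x_n^{(i)}$.
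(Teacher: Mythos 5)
Your proposal is correct and matches the paper's intended argument exactly: the paper's own proof of Theorem \ref{thm 2.1 nd} consists precisely of the substitutions $p_1p_2\mapsto\prod_{i=1}^d p_i$ and $x_ny_n\mapsto\prod_{i=1}^d x_n^{(i)}$ in the proof of Theorem \ref{theorem main}, which is what you carry out, including the correct identification of the $d$-dimensional error as the surface sum $\sum_{i}\prod_{k\neq i}x_n^{(k)}$ absorbed into $\mathcal{G}$. If anything, your treatment of part (3) via the bound $\log|\Omega_j|\le Cj$ is slightly more general than the paper's computation, which specializes there to the full shift.
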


\begin{proof}
	The proofs are similar to Theorem \ref{theorem main} by replacing $p_1p_2$ and $x_ny_n$ with $\prod_{i=1}^dp_i$ and $\prod_{i=1}^dx^{(i)}_n$ respectively.
\end{proof}	

\begin{remark}
	We remark that in additive systems, \cite{pace2018surface} shows
	\begin{equation*}
		\log \left|\mathcal{P}\left(\mathbb{Z}_{n},\Sigma_A\right)\right|-nh=\log \left(\frac{\sum_k r_k\sum_j l_j}{\lambda_A}\right).
	\end{equation*}
	In multiplicative integer systems, when $d=1$, we have 
	\begin{equation*}
		\log \left|\mathcal{P}\left(\mathbb{Z}_{p^{kn}},X^p_{\Sigma_A}\right)\right|-p^{kn} h=-\left(1-\frac{1}{p}\right)\log\lambda_A kn+o\left(kn\right).
	\end{equation*}
\end{remark}

\bibliographystyle{amsplain}
\bibliography{ban}

\end{document}